\newcommand{\cmark}{\ding{51}}%
\newcommand{\xmark}{\ding{55}}%
\theoremstyle{plain}
\newtheorem{theorem}{Theorem}[section]
\newtheorem{lemma}[theorem]{Lemma}
\newtheorem{proposition}[theorem]{Proposition}
\newtheorem{assumption}{Assumption}
\newtheorem{claim}[theorem]{Claim}
\theoremstyle{definition}
\theoremstyle{remark}
\newtheorem{remark}{Remark}
\providecommand{\R}{\mathbb{R}} 
\providecommand{\bb}{\mathbf{b}}
\providecommand{\cc}{\mathbf{c}}
\providecommand{\xx}{\mathbf{x}}
\providecommand{\yy}{\mathbf{y}}
\providecommand{\zz}{\mathbf{z}}
\providecommand{\unit}{\mathbf{1}}
\providecommand{\zero}{\mathbf{0}}
\providecommand{\mA}{\mathbf{A}}
\providecommand{\mB}{\mathbf{B}}
\providecommand{\mC}{\mathbf{C}}
\providecommand{\mE}{\mathbf{E}}
\providecommand{\mG}{\mathbf{G}}
\providecommand{\mI}{\mathbf{I}}
\providecommand{\mJ}{\mathbf{J}}
\providecommand{\mW}{\mathbf{W}}
\providecommand{\mX}{\mathbf{X}}
\providecommand{\mY}{\mathbf{Y}}
\providecommand{\mZ}{\mathbf{Z}}
\providecommand{\cC}{\mathcal{C}}
\providecommand{\cD}{\mathcal{D}}
\providecommand{\cE}{\mathcal{E}}
\providecommand{\cH}{\mathcal{H}}
\providecommand{\cN}{\mathcal{N}}
\providecommand{\cO}{\mathcal{O}}
\providecommand{\cV}{\mathcal{V}}
\newenvironment{talign*}
{\csname align*\endcsname}
{\endalign}
\newcommand{\iid}{i.i.d.\xspace}
\begin{document}
\setlength{\abovedisplayskip}{3pt}
\setlength{\belowdisplayskip}{3pt}
\title{Decentralized Gradient Tracking with Local Steps}

\author{
  Yue Liu \\
  University of Toronto\\
  Canada \\
   \And
  Tao Lin  \\
  Westlake University  \\
  China \\
  \And
  Anastasia Koloskova  \\
  EPFL  \\
  Switzerland \\
  \And
  Sebastian U. Stich  \\
  CISPA Helmholtz Center for Information Security  \\
  Germany \\
}

\maketitle

\begin{abstract}
Gradient tracking (GT) is an algorithm designed for solving decentralized optimization problems over a network (such as training a machine learning model). 
A key feature of GT is a tracking mechanism that allows to overcome data heterogeneity between nodes. 

We develop a novel decentralized tracking mechanism, $K$-GT, that enables communication-efficient local updates in GT while inheriting the data-independence property of GT. 
We prove a convergence rate for $K$-GT on smooth non-convex functions and prove that it reduces the communication overhead asymptotically by a linear factor $K$, where $K$ denotes the number of local steps.
We illustrate the robustness and effectiveness of this heterogeneity correction on convex and non-convex benchmark problems and on a non-convex neural network training task with the MNIST dataset.

\end{abstract}

\section{Introduction}


We consider distributed optimization problems,
where the objective function $f(\xx)$ on model $\xx \in \R^d$ is defined as the average of $n$ different components $\{f_1(\xx), ..., f_n(\xx)\}$, i.e., $$f(\xx)=\frac{1}{n}\sum_{i=1}^nf_i(\xx).$$  In distributed applications, different contributors (or `clients') take part in the training.
Such clients can be, for example, mobile edge devices, or computing nodes. Typically, each component $f_i(\xx)$ is only available to a single client (for instance, when $f_i(\xx)$ is defined over the training data available only locally  on the client). This makes distributed optimization problems more difficult to solve than centralized problems.

Besides convergence rate in terms of iterations, communication efficiency is one of the most important metrics in distributed algorithm design. For illustration we consider the calculation of a gradient of the global function, $\nabla f(\xx) = \frac{1}{n}\sum_i\nabla f_i(\xx)$, that forms be basis for general first-order methods. Since each client only has the ability to evaluate the local gradient $\nabla f_i(\xx)$, it is further necessary to calculate the average of these local gradients.
Centralized algorithms~\citep{fedcomm, mcmahan2017communication} realize such global aggregation by a central controller, e.g., with a parameter sever~\citep{ps}. However, this approach requires all clients to communicate with the central server simultaneously, resulting in a communication bottleneck at this hot point and a slowdown in clock time. Instead of the exact averaging, decentralized algorithms~\citep{tsitsiklis1984problems, de>cen,KoloskovaLSJ19decentralized} require only partial communication through gossip averaging and reduce communication overhead by allowing a node to communicate with fewer nodes, e.g., only its neighbors, thus avoiding having the busiest point.  How nodes are connected between each other makes up the network topology.\looseness=-1


One of the most challenging aspects in decentralized optimization is data-heterogeneity, that is when the training data is not identically and independently (non-\iid) distributed across the nodes.
Such non-\iid distributions often arise in practical applications, since, for example, training data originating from cell phones, sensors, or hospitals can have regional differences~\cite{client-sampling}.
In this case, the local empirical losses on each client are different.
This can slow down the convergence~\citep{dsgd, unified-dsgd} or even yield local overfitting (often termed \emph{client\nobreakdash-drift}) as the clients
may drift away from the global optimum in the course of the optimization process~\citep{scaffold, non-iid-benchmark, noniid, unified-dsgd}.

There are several decentralized algorithms that have been shown to mitigate heterogeneity. However, most of them are proven to converge for only strongly convex functions~\citep{extra_convex,primal-dual,diging,gt-svrg,network-svrg}, or are proven for smooth non-convex functions but have a strict constraint on network topologies~\citep[such as e.g.][]{d2_nonconvex}.  
Stochastic gradient tracking~(GT)~\citep{Lorenzo2016GT-first-paper, diging, gt, gt-nonconvex, koloskova2021improved} algorithms have been proposed to address data-heterogeneity for arbitrary networks for smooth non-convex functions. Its convergence rate only depends on the data heterogeneity at the initial point, which can be completely removed with proper initialization.
However, the clients are required to communicate with all their neighbors in the network after every single model update.
These methods are still therefore associated with high communication overheads.

In order to further reduce communication overhead within distributed training, various engineering techniques have been proposed, such as using large batch~\citep{large-mini-batch, large-mini-batch-2, largebatch}, model/gradient compression~\citep{dsgd-compression} or asynchronized communication~\citep{async-dsgd}. In this work, we focus on local updates to reduce communication frequency, which is often efficient in practice but remains challenging in the theoretical analysis~\citep{mcmahan2017communication, local-sgd1, local-sgd2,fedcomm}. However, performing a large number of local steps can
\emph{exacerbate} the client\nobreakdash-drift.
The resulting optimization difficulties can negate the communication savings~\citep{scaffold, unified-dsgd}. 
The analysis of incorporating local steps while heterogeneity independence in the decentralized optimization is still seldom investigated.

Integrating local updates into GT is non-trivial.
For instance, simply skipping communication rounds in GT (and thereby performing a number of local updates in-between)
does not work well in practice\footnote{We evaluate this variant (termed \emph{periodical GT}) below in Section~\ref{sec:experiments}, see e.g.\ 
Figure \ref{fig: mnist}.}. 
A concurrent work LU\nobreakdash-GT\citep{lu-gt} analyzed the performance of GT periodically skipping the communication but only in the deterministic setting.\footnote{This concurrent work was independently developed while we were finalizing this manuscript. We will add a more detailed comparison to the next version of this manuscript.}

As a solution, we carefully design a novel tracking mechanism that enables to combine GT with local steps.\footnote{Partial results of this paper were previously presented in YL's master thesis~\citep{thesis}}
The resulting algorithm---$K$-GT, where $K$ denotes the number of local steps---is a novel decentralized method that provides communication-efficient tracking with local updates.
We prove that the convergence of $K$-GT depends only on the data heterogeneity at the starting point and that this weak data dependence can be completely circumvented with an additional round of global communication.
As long as $K$\nobreakdash-GT uses the same initialization as GT,  $K$\nobreakdash-GT inherits the heterogeneity independence property of GT.
We prove that $K$\nobreakdash-GT~(Algorithm~\ref{algo: k-gt}) achieves asymptotically linear speed-up in terms of communication round w.r.t.\ local steps $K$ and number of clients $n$, and that it converges in $\cO\big(\frac{\sigma^2}{nK\epsilon^2}\big)$ rounds to an $\epsilon$-approximate stationary point.
The number of communication rounds is asymptotically reduced by a factor of $K$ compared to GT.
We further show that the convergence rate (including higher order terms) does not depend on
the data-heterogenity if with proper initialization, opposed as e.g.\ for decentralized stochastic gradient descent~(D\nobreakdash-SGD) without tracking. 
\looseness=-1

The outline of this paper is as follows:\ In Section 2, we give the precise formulation of the distributed optimization problem setting.  In Section 3, we introduce the algorithm design of $K$-GT and demonstrate how it helps to correct for heterogeneity. Here, our main result state its convergence rate, see Theorem~\ref{thm: k-gt}. In Section 4, we generalize the gradient tracking framework and discuss about the drawbacks of other GT alternatives that could also be stemmed from the same framework. We in addition contribute their convergence results and give a comparison to show that $K$-GT is the most communication efficiency theorectically. In Section 5, we compare the GT-variants with baseline D-SGD with numerical examples in detail.
\paragraph*{Contributions.} We summarize our main results below.
\begin{itemize}[leftmargin=12pt,nosep,itemsep=1pt]
 \item We develop a novel gradient tracking algorithm for distributed optimization and analyze its convergence properties. We prove that $K$\nobreakdash-GT enjoys heterogeneity-independent complexity estimates (with proper initialization)
  and prove that it converges asymptotically in  $\cO\big(\frac{\sigma^2}{nK\epsilon^2}\big)$ rounds, where $n$ denotes client number, $K$ the number of local steps, $\sigma^2$ the stochastic noise level and $\epsilon$ the accuracy. This improves by a factor of $K$ over the GT baseline.
 \item We provide additional theoretical insights, by studying
 (i) the convergence of the na\"ive local extension of GT, \emph{periodic GT}, explaining that it performs worse than $K$-GT when the stochastic noise is large, and
 (ii) a computationally inefficient variant, \emph{large-batch GT} that matches the iteration, but not the computation complexity of $K$-GT.
%
	\item We empirically verify the theoretical results on strongly convex and non-convex functions and explain the impact of noise, local steps and data-heterogeneity on the convergence. 
$K$-GT is robust against the data-heterogeneity while improving the communication efficiency and  improves generalization performance over baseline algorithms.
\end{itemize}

\begin{table}[tb]
\caption{A comparison under different working conditions. $\Delta \leq n$ denotes the maximum degree of the communication graph. $K$-GT is the first fully-decentralized tracking algorithm with local steps.}
\renewcommand{\arraystretch}{1.1}
	\centering
	\resizebox{\textwidth}{!}{
		\begin{tabular}{lccc}
			\toprule[1pt]
			\multirow{2}{*}{\textbf{Algorithm}} & \multicolumn{3}{c}{\textbf{Settings}}\\\cline{2-4}
			   &  \makecell[c]{Communication cost at the busiest point}& Local steps & \makecell[c]{heterogeneity-robustness\textsuperscript{a}} \\ \midrule[0.8pt]
			  \makecell[l]{SCAFFOLD~\citep{scaffold}} & $\cO(n)$& \cmark & \cmark\\\hline
			  \makecell[l]{GOSSIP\nobreakdash-PGA~\citep{gossip-pga}} & \multirow{5}{*}{$\cO(\Delta)$} & \cmark & \xmark \\\cline{1-1}\cline{3-4}
			\makecell[l]{D\nobreakdash-SGD~\citep{unified-dsgd}} &  & \cmark & \xmark \\\cline{1-1}\cline{3-4}
			\makecell[l]{GT~\citep{diging}} &  & \xmark &\cmark \\\cline{1-1}\cline{3-4}
			\makecell[l]{$D^2$~\citep{d2_nonconvex}} &  & \xmark &\cmark \\\cline{1-1}\cline{3-4}
			\makecell[l]{$K$\nobreakdash-GT {\textbf{[ours]}} }  &  & \cmark & \cmark \\
			\bottomrule[1pt]
		\end{tabular}}
\tabnote{\textsuperscript{a}The data heterogeneity does not impact the worst-case convergence rate (but might require special initialization).}		
	\label{tab: comparison}
\end{table}
\section{Problem setting}
We introduce the notation and setup in this section.
\subsection{Decentralized Optimization Problem}
We consider the optimization problems as the summation from $n$-client loss functions, \begin{equation}
	\underset{\xx \in \mathbb{R}^d}{\mbox{min}} f(\xx):=\frac{1}{n}\sum_{i=1}^n \left[f_i(x) := \mathbb{E}_{\xi_i\sim \cD_i}F_i(\xx;\xi_i) \right]\,,
	\label{eq: global function}
\end{equation}
where $n$ denotes the number of clients within the system, $\xi_i$ is a random sample from $\cD_i$ and $\cD_i$ denotes the local distribution only available on node $i\in[n]$. $\cD_i$ could be arbitrary and different among clients considering the applications. This setup  models both empirical risk minimization and the online optimization setting. 

In this work, we consider general smooth non-convex functions and bounded stochastic noise.
\begin{assumption}[Smoothness]\label{ass: smooth}Each function $f_i(x):\mathbb{R}^d\rightarrow\mathbb{R},\ \forall i\in[n]$ is differentiable and there exists a constant $L>0$ such that for each $\xx,\ \yy\in\mathbb{R}^d$, 
\begin{equation}\nonumber
	f_i(\yy)\leq f_i(\xx) + \nabla f_i(\xx)^T(\yy-\xx) + \frac{L}{2}||\xx-\yy||_2^2 \,.
\end{equation}
\label{ass:l}
\end{assumption}
\vspace{-5mm}
\begin{assumption}[Bounded variance]Each client variance is uniformly bounded, 
	\begin{equation}\nonumber
		\begin{aligned}
			  \forall i\in[n],\  \forall \xx \in \R^d,\ \mathbb{E}_{\xi \sim \cD_i}||\nabla F_i(\xx; \xi)-\nabla f_i(\xx)||_2^2\leq \sigma^2 \,.
		\end{aligned}
	\end{equation}
	\label{ass:noise}
\end{assumption}
\vspace{-10mm}

\subsection{Communication graph}The training is implemented over a decentralized network, and its topology is modelled as an undirected graph: $(\cV, \cE)$, where $\cV:=\{ 1, 2, \ldots, n \}$ is the node set and $\cE \subseteq \cV \times \cV$ is the edge set. Node~(or client) represents a computing node, and clients communicate only along the edges $e\in\cE$. We denote the adjacency matrix $\mW\in\mathbb{R}^{n\times n}$, where $w_{ij}=0$ means node $i$ and $j$ are not connected, i.e., $e_{ij}=(i,j)\notin\cE$.
\begin{assumption}[Mixing rate] Given the symmetric and doubly stochastic mixing matrix $\mW\in\mathbb{R}^{n\times n}$ of nonnegative real numbers, i.e., $\forall i,j\in[n],\ w_{ij}\geq 0$, $\textstyle \sum_{i=1}^n w_{ij} = \textstyle  \sum_{j=1}^n w_{ij} = 1,$ the consensus distance decreases linearly after averaging step, i.e. there exists a $1\geq p>0$ such that $$||\mX\mW - \bar\mX||_F^2\leq (1-p)||\mX - \bar\mX||_F^2,\ \forall \mX \in \R^{d \times n}\,.$$
\label{ass:p}
\end{assumption}\vspace{-5mm}
Note that if the commonly used network parameter $\rho:=||\mW-\frac{\unit_n\unit_n^T}{n}||$~\cite{gossip-pga} is strictly less than 1, then $1\geq p>0$ \citep[see e.g.][]{unified-dsgd, koloskova2021improved}.
The mixing rate describes the connectivity of the network. The larger value of $p$ means the communication graph is better connected. $p=1$ for a complete graph $\mW = \tfrac{1}{n}\unit\unit^T$, and $p=0$ for a disconnected graph $\mW = \mI_n$.


\subsection{Data heterogeneity and correction}
When the local distributions $\{\cD_i\}$ are identical on each client, the local functions $\{f_i(\xx)\}$ are identical to each other, i.e., $f_i(\xx)\equiv f(\xx)$. Otherwise, heterogeneous local distributions result in heterogeneous local functions. And heterogeneity is usually measured by the discrepancy between local gradients $\{\nabla f_i(\xx)\}$ and global gradient $\nabla f(\xx)$~\cite{scaffold,unified-dsgd} as follows.
\begin{assumption}[Data-heterogeneity]\label{ass: noniid} There exists constants $\zeta^2>0$ and $B\geq 1$ such that  \begin{equation}\nonumber\forall \xx\in\mathbb{R}^d,\ \frac{1}{n} \sum_{i=1}^n ||\nabla f_i(\xx)||^2\leq \bar\zeta^2+B^2||\nabla f(\xx)||^2\,,\end{equation} where both $\bar\zeta^2$ and $B^2$ represent the degree of heterogeneity within the system.
\label{ass:zeta}
\end{assumption}
The baseline Decentralized SGD~(D\nobreakdash-SGD) uses na\"{i}ve gradient w.r.t local model, the convergence of which inevitably are influenced by both $\bar\zeta^2$ and $B$~\citep{dsgd}.
\subsubsection{Notations}
Gradient tracking algorithm mainly manipulates between two variables, model iterate $\xx\in\mathbb{R}^d$ and tracking variable $\zz\in\mathbb{R}^d$. More precisely, we denote vector $\yy\in\{\xx,\ \zz\}$ as $\yy_i^{(t)+k}$ on node $i$ in local step $k$ at communication round ${t}$, and denote its average by $\bar\yy = \frac{1}{n}\sum_i\yy_i$.

The collection of vectors $\yy_i$ for all $i\in[n]$ in matrix form is denoted by a capital letter with columns $\yy_i$, i.e., $$\mY = \begin{bmatrix}\yy_1, \ldots, \yy_n\end{bmatrix}\in\mathbb{R}^{d\times n},\quad \bar\mY = \begin{bmatrix}\bar\yy, \ldots,  \bar\yy\end{bmatrix}= \tfrac{1}{n}  \mY\unit_n\unit_n^T\in\mathbb{R}^{d\times n}\,.$$
Also, we extend this matrix definition to both gradient and stochastic gradient of (\ref{eq: global function}) w.r.t model $\mX$ on sample $\xi=[\xi_1,\ \ldots,\ \xi_n]$, where $\xi_i\sim \cD_i,$
\begin{equation}\nonumber
	\begin{aligned}
		\nabla F(\mX;\xi) &=\begin{bmatrix}\nabla F_1(\xx_1;\xi_1),\ldots,\nabla F_n(\xx_n;\xi_n)\end{bmatrix}\in\mathbb{R}^{d\times n},	\\
		\nabla f(\mX)&=\mathbb{E}_{(\xi_1, \ldots, \xi_n)}\nabla F(\mX;\xi)
		=\begin{bmatrix}\nabla f_1(\xx_1), \ldots, \nabla f_n(\xx_n)\end{bmatrix}\in\mathbb{R}^{d\times n}\,.
	\end{aligned}
\end{equation}
\subsubsection{Gradient tracking}
Gradient tracking algorithm~(GT)~\cite{gt} is defined by the following update equations:
\begin{equation}\label{eq: dsgt}
	\begin{aligned}
		\mX^{(t+1)} & = (\mX^{(t)}-\eta \mZ^{(t)})\mW                               \\
		\mZ^{(t+1)} & = \mZ^{(t)}\mW +\mG^{(t+1)} -\mG^{(t)},
	\end{aligned}
\end{equation}
in matrix format. Here $\mG^{(t)} = \nabla F(\mX^{(t)}; \xi^{(t)})$ and $\eta>0$ denotes the stepsize. 

When data is heterogeneous among different nodes, $\{\nabla F_i(\xx;\xi_i),\ \forall i\}$ are different. But GT uses bias-correction to compensate heterogeneous gradient at each node. This correction is governed by the tracking variable $\mZ$ that replaces the na\"{i}ve gradient: \begin{equation}\label{eq: dsgt_corr}\mZ^{(t+1)} = \nabla F(\mX^{(t+1)}; \xi^{(t+1)}) + \underbrace{ \mZ^{(t)}\mW  -\mG^{(t)}}_{\text{correction}}\end{equation} Since the update (\ref{eq: dsgt}) simultaneously updates both the model $\mX$ and the tracking variable $\mZ$, there is no need to take extra consideration on the heterogeneous local gradient. GT is proven to converge regardless of data heterogeneity~\citep{gt}.

\section{K-GT: Gradient Sum Tracking algorithm}
In this section, we present our new decentralized stochastic algorithm $K$\nobreakdash-GT with its convergence analysis for general non-convex functions.




\vspace{-3mm}
\subsection{Algorithm}
In the $K$-GT algorithm we allow each client to perform $K \geq 1$ local steps between each communication round. To compensate to the data-heterogeneity, we use a similar correction as in  (\ref{eq: dsgt_corr}) on top of the stochastic gradient. We denote the correction as $\cc_i$ on node $i$. Then each node repeats the following updating rule, $i\in[n]$:
\begin{enumerate}
	\item Compute a local stochastic gradient $\nabla F_i(\xx_i; \xi_i)$ by sampling $\xi_i$ from distribution $\cD_i$;
	\item Update the local model $\xx_i^{(t)+k+1}= \xx_i^{(t)+k} - \eta_c \bigl(\nabla F_i(\xx_i^{(t)+k}; \xi_i^{(t)+k}) + \cc_i^{(t)}\bigr)$ using the stochastic gradients at $(t)+k$-th iteration and correction $\cc_i^{(t)}$ in $t$-th communication;
	\item Repeat step (1)-(2) $K$ times, then obtain the tracking throughout local steps, $\zz_i^{(t)}  = \frac{1}{K\eta_c} \bigl(\xx_i^{(t)} - \xx_i^{(t)+K}\bigr)$. Exchange $\{\xx_i, \cc_i\}$ with neighbors: (in matrix format):\begin{equation}\label{eq: k-gt}
	\begin{aligned}
		\mX^{(t+1)} & = \Big(\mX^{(t)} - \eta_s(\mX^{(t)}-\mX^{(t)+K})\Big)\mW	\, ,\\
		\mC^{(t+1)} & = \mC^{(t)} + \mZ^{(t)}(\mW -\mI) \,.	\\
	\end{aligned}
\end{equation}
\end{enumerate}
The complete algorithm is summarized in Algorithm~\ref{algo: k-gt}.
\vspace{-2mm}
\begin{restatable}[Gradient Sum Tracking]{proposition}{localgt}
\label{cor: local_gt}
Define $\mZ^{(t)} = \frac{1}{K\eta_c}\Big(\mX^{(t)} - \mX^{(t)+K}\Big)$ as the tracking variable during communication round~$t$. The update rule for both models $\mX^{(t)}$ and tracking variables $\mZ^{(t)}$ at communication in $K$\nobreakdash-GT can be rewritten as ($\eta=\eta_s\eta_c$):
\begin{equation}
	\begin{aligned}
		\mX^{(t+1)} & = \Big(\mX^{(t)}-K\eta\mZ^{(t)}\Big)\mW \,, \\
		\mZ^{(t+1)} & = \mZ^{(t+1)}\mW + \mG^{(t+1)} - \mG^{(t)}\,,
	\end{aligned}
	\label{eq: local_gt}
\end{equation}
where $\mG^{(t)} = \frac{1}{K} \sum_{k}\nabla F(\mX^{(t)+k};\xi^{(t)+k})$ denotes the mean update over the local steps.
\end{restatable}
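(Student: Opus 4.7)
The plan is to derive the claimed reformulation by first expressing the tracking variable $\mZ^{(t)}$ in terms of the mean local stochastic gradients $\mG^{(t)}$ and the correction $\mC^{(t)}$, and then substituting this identity into the two update rules from~(\ref{eq: k-gt}). The key observation is that the $K$ local steps of step~(2) form a telescoping sum: for each node $i$, summing
\[
\xx_i^{(t)+k+1} = \xx_i^{(t)+k} - \eta_c\bigl(\nabla F_i(\xx_i^{(t)+k};\xi_i^{(t)+k}) + \cc_i^{(t)}\bigr)
\]
over $k=0,\ldots,K-1$ yields $\xx_i^{(t)+K}-\xx_i^{(t)} = -\eta_c\sum_{k=0}^{K-1}\nabla F_i(\xx_i^{(t)+k};\xi_i^{(t)+k}) - K\eta_c\cc_i^{(t)}$. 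Dividing by $-K\eta_c$ and stacking columns gives the core identity $\mZ^{(t)} = \mG^{(t)} + \mC^{(t)}$, with $\mG^{(t)}$ defined as in the statement.

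With this identity in hand, the first update is immediate: in (\ref{eq: k-gt}) we have $\mX^{(t)}-\mX^{(t)+K} = K\eta_c\mZ^{(t)}$ by definition of $\mZ^{(t)}$, so
\[
\mX^{(t+1)} = \bigl(\mX^{(t)}-\eta_s\cdot K\eta_c\mZ^{(t)}\bigr)\mW = (\mX^{(t)}-K\eta\mZ^{(t)})\mW,
\]
with $\eta = \eta_s\eta_c$, which matches the first line of (\ref{eq: local_gt}). Note this step uses only the definition of $\mZ^{(t)}$ and not the identity $\mZ^{(t)} = \mG^{(t)} + \mC^{(t)}$.

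For the second line, I would plug the correction update $\mC^{(t+1)}=\mC^{(t)}+\mZ^{(t)}(\mW-\mI)$ into the identity applied at time $t+1$, namely $\mZ^{(t+1)} = \mG^{(t+1)} + \mC^{(t+1)}$, giving
\[
\mZ^{(t+1)} = \mG^{(t+1)} + \mC^{(t)} + \mZ^{(t)}(\mW-\mI).
\]
Then I would eliminate $\mC^{(t)}$ using the identity at time $t$ (i.e.\ $\mC^{(t)} = \mZ^{(t)} - \mG^{(t)}$) to obtain
\[
\mZ^{(t+1)} = \mG^{(t+1)} - \mG^{(t)} + \mZ^{(t)}\mW,
\]
which is the second line of (\ref{eq: local_gt}) (reading the RHS $\mZ^{(t+1)}\mW$ there as the typo it evidently is).

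There is no real analytical obstacle here; the proposition is essentially a change-of-variables identity. The only thing to be careful about is the order of operations: the identity $\mZ^{(t)}=\mG^{(t)}+\mC^{(t)}$ holds because the correction $\cc_i^{(t)}$ is held \emph{fixed} throughout all $K$ local steps of round $t$, which is what permits the clean telescoping. If the correction were updated within local steps, the telescoping would pick up extra terms and the reformulation would fail, so I would flag this in the proof as the structural reason the algorithm admits the compact form~(\ref{eq: local_gt}).
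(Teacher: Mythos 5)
Your proof is correct and follows essentially the same route the paper intends: the paper's own proof (Appendix~\ref{ap: gt}) is just a one-line ``simply reformulating'' remark, and your telescoping of the $K$ local steps to obtain $\mZ^{(t)} = \mG^{(t)} + \mC^{(t)}$, followed by substitution into~(\ref{eq: k-gt}), is precisely the omitted computation. You are also right that the second line of~(\ref{eq: local_gt}) should read $\mZ^{(t)}\mW$ rather than $\mZ^{(t+1)}\mW$, consistent with the GT update~(\ref{eq: dsgt}).
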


The detailed proof is included in Appendix~\ref{ap: gt}.
\begin{remark}\label{remark: equivalence}
If $K=1$ in~(\ref{eq: local_gt}), $K$\nobreakdash-GT is equivalent to Gradient Tracking~\citep{gt} with $\eta=\eta_s\eta_c$.
\end{remark}
\paragraph*{$K$-GT essentially runs SGD if communication is the most sufficient.} To understand the intuition behind $K$-GT, let us consider the global average $\bar\mX$ at each iterate, which gets updated just like the standard stochastic gradient descent:
\begin{equation}	\nonumber
	\begin{aligned}	
		\bar\mX^{(t)+k+1} &= \Big(\mX^{(t)+k} - \eta_c(\nabla F(\mX^{(t)+k}; \xi^{(t)+k}) + \mC^{(t)})\Big)\tfrac{\unit\unit^T}{n}	\\
		& = \bar\mX^{(t)} - \eta_c\Big(\overline{\nabla F(\mX^{(t)+k}; \xi^{(t)+k})} + \bar\mC^{(t)}\Big) \,.
	\end{aligned}
\end{equation}
If initialized to be $\mC^{(0)} = \nabla F(\mX^{(0)}; \xi^{(0)})\bigl(\frac{\unit\unit^T}{n} - \mI\bigr)$, the average of correction satisfies 
\begin{equation}\nonumber
	\begin{aligned}
		\bar\mC^{(t+1)} & = \bar\mC^{(t)}+ \mZ^{(t)}(\mW - \mI)\tfrac{\unit\unit^T}{n} = \bar\mC^{(t)} \,,	\\
		\bar\mC^{(0)} & = \nabla F(\mX^{(0)}; \xi^{(0)})\left(\tfrac{\unit\unit^T}{n} - \mI\right)\frac{\unit\unit^T}{n}	 \equiv \zero \,.
	\end{aligned}
\end{equation}
Then the average of model iterate satisfies $$\bar\mX^{(t)+k+1} = \bar\mX^{(t)} - \eta_c\overline{\nabla F(\mX^{(t)+k}; \xi^{(t)+k})},$$ which updates model with averaged stochastic gradient. 

\paragraph*{How does this correction improves D-SGD?} We consider applying the similar analysis from~\cite{d2_nonconvex} to illustrate the effectiveness of $K$-GT.
Assume that $\mX^{(t)}$ has achieved an optimum $\mX^{\star} : = \xx^{\star}\unit^T$ with all local models equal to the optimum $\xx^{\star}$. Based on our analysis in appendix~(Lemma~\ref{lemma: correction}), the correction will be equal to $$\cc_i^{\star} :=-\nabla F_i(\xx^{\star}; \xi) + \frac{1}{n}\sum_{j}\nabla F_j(\xx_j^{\star}; \xi)\,.$$ Then the next local update for $K$-GT would be
\begin{equation}\nonumber 
	\begin{aligned}
		\mX^{(t)+1} &= \mX^{\star} -\eta_c(\nabla F(\mX^{\star}; \xi) + \mC^{\star})	\\
		& = \mX^{\star} -\eta_c\nabla F(\mX^{\star}; \xi)\tfrac{\unit\unit^T}{n} \,.
	\end{aligned}
\end{equation}
This illustration shows that for $K$-GT, the convergence when we approach a solution with only local update relies on the magnitude of $\mE||\nabla F(\mX^{\star}; \xi)\frac{\unit\unit^T}{n}||_F^2$, which is bounded by $\cO(\sigma^2)$. 

However, consider the same situation for D-SGD, 
\begin{equation}\nonumber
	\begin{aligned}
		\mX^{(t)+1} &= \mX^{\star} -\eta\nabla F(\mX^{\star}; \xi). 	\end{aligned}
\end{equation}On different nodes, $\nabla F_i(\mX^{\star}; \xi)$ deviates from each other due to data heterogeneity, and the deviation can only be characterized by $\zeta^2$ as suggested in Assumption~\ref{ass: noniid}. Then the upper bound for D-SGD of the same magnitude of convergence when in the neighborhood of solution is $\cO(\sigma^2 + \bar\zeta^2)$~\citep{d2_nonconvex}, which is obviously worse than that for $K$-GT. The additional $\cO(\bar\zeta^2)$ in D-SGD from the data heterogeneity can never be improved if always using the sole stochastic gradient~\citep{dsgd}. 


%

\begin{algorithm}
	\caption{$K$\nobreakdash-GT: Gradient Sum 
Tracking}
\resizebox{\linewidth}{!}{
	\begin{minipage}{1.05\linewidth}
	\label{algo: k-gt}
	\begin{algorithmic}[1]
		\State \textbf{parameters:}
		 $T$: number of communication; $K$: number of local steps; $\eta_c,\ \eta_s$: local, communication stepsize; $\mW$: given topology.
		\State \textbf{Initialize:} $\forall i,j\in[n],\ \xx^{(0)}_i = \xx^{(0)}_j;\ \cc_i^{(0)}= - \nabla F_i(\xx^{(0)}; \xi_i)+\frac{1}{n}\sum_j\nabla F_j(\xx^{(0)}; \xi_j) $\footnote{This initialization in correction $\cc_i^{(0)}$ is required for heterogeneity-independent analysis in theory. We demonstrate later with experiment that simply choosing $\cc_i^{(0)} = \zero$ works well in practice.}.
		\For{client $i\in\{1,\ldots,n\}$ parallel}
			\For{\textbf{communication:} $t\gets0$ to $T-1$}
				\For{\textbf{local step:} $k\gets0$ to $K-1$}
					\State $\xx_i^{(t)+k+1} = \xx_i^{(t)+k} - \eta_c(\nabla F_i(\xx_{i}^{(t)+k}; \xi_i^{(t)+k}) + \cc_i^{(t)})$
				\EndFor
				\State $\zz_i^{(t)} = \tfrac{1}{K\eta_c}(\xx_i^{(t)} - \xx_i^{(t)+K})$
				\State $\cc_i^{(t+1)} = \cc_i^{(t)} - \zz_i^{(t)} + \sum_j w_{ij}\zz_j^{(t)}$ \Comment{update tracking variable}
				\State $\xx_i^{(t+1)} = \sum_jw_{ij}(\xx_j^{(t)}-K\eta_s\eta_c\zz_j^{(t)})$\Comment{update model parameters}
			\EndFor
		\EndFor
	\end{algorithmic}
\end{minipage}}
\end{algorithm}

\subsection{Main theorem: data-independent convergence on non-convex functions}
In this section, we present the convergence rate of $K$-GT. Note that $p$ is the network parameter defined in Assumption~\ref{ass:p}.

\begin{theorem}[{$K$-GT convergence}]
\label{thm: k-gt}
For schemes as in Algorithm~\ref{algo: k-gt} with mixing matrices such as in Assumption \ref{ass:p} and arbitrary error $\epsilon>0$, there exists a constant stepsize $\eta_c=O(\frac{p}{KL})$ and $\eta_s=O(p)$ such that under Assumption \ref{ass:l} and \ref{ass:noise} for $L$\nobreakdash-smooth, (possibly non-convex) functions, it holds $\frac{1}{T+1}\sum_t\mE||\nabla f(\bar\xx^{(t)})||^2\leq \epsilon$ after $$\cO\left(\frac{\sigma^2}{Kn\epsilon^2} +\frac{\sigma}{p^2\sqrt{K} \epsilon^{\frac{3}{2}}}+\frac{1}{p^2 \epsilon}\right)\cdot L$$ communication rounds.
\end{theorem}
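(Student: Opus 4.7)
The plan is to follow a standard Lyapunov-style analysis for decentralized tracking algorithms, but adapted to handle the $K$ local steps. I will work with the compact formulation from Proposition~\ref{cor: local_gt}, in which one communication round looks like $\mX^{(t+1)} = (\mX^{(t)} - K\eta\mZ^{(t)})\mW$ with $\mG^{(t)} = \tfrac{1}{K}\sum_{k=0}^{K-1}\nabla F(\mX^{(t)+k};\xi^{(t)+k})$. The first step is a descent inequality on the averaged iterate. Since the computation in the excerpt shows $\bar\mC^{(t)}\equiv\zero$ under the prescribed initialization, the average update is pure SGD with step $\eta_c$ on the averaged stochastic gradients, so $L$-smoothness gives, after summing over $K$ local steps,
\begin{equation*}
\E f(\bar\xx^{(t+1)}) \leq f(\bar\xx^{(t)}) - \tfrac{K\eta_c}{2}\|\nabla f(\bar\xx^{(t)})\|^2 + \tfrac{L^2\eta_c}{n}\sum_{k=0}^{K-1}\E\|\mX^{(t)+k}-\bar\mX^{(t)+k}\|_F^2 + \tfrac{LK\eta_c^2\sigma^2}{n} \,,
\end{equation*}
up to constants, where the middle term captures both the consensus error and the local client drift.

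The second step is to set up the two contraction recursions for the consensus distance $\Xi^{(t)}:=\E\|\mX^{(t)}-\bar\mX^{(t)}\|_F^2$ and the tracking error $\Psi^{(t)}:=\E\|\mZ^{(t)}-\bar\mZ^{(t)}\|_F^2$. Using Assumption~\ref{ass:p} on the mixing matrix and the standard Young inequality with parameter $p/2$, I get $\Xi^{(t+1)}\leq (1-\tfrac{p}{2})\Xi^{(t)} + \tfrac{2K^2\eta^2}{p}\Psi^{(t)}$, and similarly $\Psi^{(t+1)}\leq(1-\tfrac{p}{2})\Psi^{(t)} + \tfrac{2}{p}\E\|\mG^{(t+1)}-\mG^{(t)}\|_F^2$. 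The key work is bounding $\E\|\mG^{(t+1)}-\mG^{(t)}\|_F^2$ by expanding $\mG^{(t)} = \nabla f(\mX^{(t)}) + (\text{stochastic noise}) + (\text{local drift})$ and using $L$-smoothness, which produces terms in $\Xi^{(t)}$, in the aggregated local drift $\sum_k\|\mX^{(t)+k}-\mX^{(t)}\|_F^2$, in $n\|\nabla f(\bar\xx^{(t)})\|^2$, and in $K\sigma^2$.

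The third step is the local-drift lemma: from the inner update $\xx_i^{(t)+k+1}=\xx_i^{(t)+k}-\eta_c(\nabla F_i+\cc_i^{(t)})$, an induction over $k$ gives $\sum_{k=0}^{K-1}\E\|\mX^{(t)+k}-\mX^{(t)}\|_F^2 \lesssim K^2\eta_c^2\bigl(L^2\Xi^{(t)}+\Psi^{(t)}+n\|\nabla f(\bar\xx^{(t)})\|^2+K\sigma^2\bigr)$, provided $K\eta_c L$ is a small enough constant (this is where the $\eta_c=\mathcal{O}(p/(KL))$ scaling first appears). Here I use that the correction satisfies $\|\mC^{(t)}\|_F^2\lesssim\Psi^{(t)}+\text{terms in }\Xi^{(t)},\|\nabla f(\bar\xx)\|^2$ by the analogue of Lemma~\ref{lemma: correction}.

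Finally I assemble everything into a Lyapunov function $\Phi^{(t)}= f(\bar\xx^{(t)})-f^\star + \tfrac{\alpha}{n}\Xi^{(t)}+\tfrac{\beta K^2\eta^2}{n}\Psi^{(t)}$ for appropriately chosen $\alpha,\beta = \Theta(1/p)$. Plugging in the drift bound absorbs the $\sum_k\|\mX^{(t)+k}-\mX^{(t)}\|_F^2$ terms, and the contraction coefficients $1-p/2$ (together with $\eta_c=\mathcal{O}(p/(KL))$, $\eta_s=\mathcal{O}(p)$, so $K\eta L = K\eta_s\eta_c L = \mathcal{O}(p^2)$) ensure that the coefficients in front of $\Xi^{(t)}$ and $\Psi^{(t)}$ in $\Phi^{(t+1)}-\Phi^{(t)}$ are non-positive, leaving only $-c K\eta_c\|\nabla f(\bar\xx^{(t)})\|^2 + cLK\eta_c^2\sigma^2/n + cL^3K^3\eta_c^4\sigma^2/p^3$. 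Telescoping over $T$ rounds and tuning $\eta_c$ via the standard three-way balance (constant term, stochastic noise on $n$, stochastic noise on tracking) produces exactly the three summands $\sigma^2/(Kn\epsilon^2)$, $\sigma/(p^2\sqrt{K}\epsilon^{3/2})$, and $1/(p^2\epsilon)$, all scaled by $L$.

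The main obstacle I anticipate is the circular dependency in step two: $\Psi^{(t+1)}$ depends on the gradient difference $\mG^{(t+1)}-\mG^{(t)}$, which depends on the local drift across $K$ steps, which itself depends on $\Psi^{(t)}$ through the correction term $\cc_i^{(t)}$. Decoupling this cleanly requires choosing the Lyapunov coefficient in front of $\Psi^{(t)}$ to be of order $K^2\eta^2/p$, and imposing the stepsize constraint $K\eta_c L\lesssim p$ so that the drift contribution is strictly a fraction of $\Psi^{(t)}$. Getting the constants right here is what ultimately fixes the exponents of $p$ in the higher-order terms.
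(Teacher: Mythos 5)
Your proposal is correct and follows essentially the same route as the paper's proof: a potential (Lyapunov) function combining $f(\bar\xx^{(t)})-f^{\star}$, the consensus distance, and a tracking-error term, fed by a client-drift lemma, a descent lemma on the averaged iterate (valid because $\bar\mC^{(t)}\equiv\zero$ under the prescribed initialization), and the standard three-term stepsize tuning with $\eta_c=\cO(p/(KL))$, $\eta_s=\cO(p)$. The one substantive difference is the choice of the third Lyapunov component: you track the consensus error of the tracking variable, $\Psi^{(t)}=\mE\|\mZ^{(t)}-\bar\mZ^{(t)}\|_F^2$, driven by $\mE\|\mG^{(t+1)}-\mG^{(t)}\|_F^2$, whereas the paper tracks the \emph{quality of the correction} at the averaged iterate, $\gamma_t=\frac{1}{nL^2}\mE\|\mC^{(t)}+\nabla f(\bar\mX^{(t)})(\mI-\mJ)\|_F^2$, whose recursion is driven instead by the drift $\cE_t$ and the per-round progress $\mE\|\bar\xx^{(t+1)}-\bar\xx^{(t)}\|^2$; the two are interchangeable here, and your resolution of the circular dependency (weighting the $\Psi$-term by $\Theta(K^2\eta^2/p)$ and imposing $K\eta_c L\lesssim p$) mirrors the paper's choice of coefficients $A,B=\Theta(p)$-scaled weights. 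Two small points to tidy up when writing it out: your first display treats the communicated average as moving with stepsize $\eta_c$, but $\bar\xx^{(t+1)}-\bar\xx^{(t)}=-\eta_s\eta_c\sum_{k}\overline{\nabla F(\mX^{(t)+k};\xi^{(t)+k})}$, so the descent lemma must be stated with the effective stepsize $\eta=\eta_s\eta_c$ as you implicitly do later; and the final tracking-noise term must come out proportional to $(K\eta)^3$ (after dividing by $K\eta$, an $(K\eta)^2$ term) with the $\eta_s^{-2}\sim p^{-2}$ factors accounted for, since that is exactly what produces the $\sigma/(p^2\sqrt{K}\epsilon^{3/2})$ summand via the unrolling lemma.
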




\vspace{-5mm}
\section{Discussion}\label{sec: discuss}
In this section, we are going to introduce and compare with other possible ways of introducing local steps to GT that has the similar communication pattern as $K$\nobreakdash-GT.
%

\vspace{-5mm}
\subsection{Other GT alternatives}\label{sec:alternatives}

 \subsubsection{Gradient Tracking with Periodical Communication (Periodical GT).} \label{sec:perodical_gt}
 There is another way to incorporate local steps into above framework~(\ref{eq: dsgt}). Instead of communication via fixed topology $\mW$, the communication graph changes along with time denoted by $\mW^{(t)}$. Note that $\mW^{(t)} = \mI$, which means there is actually no communication. 
 If $\mW^{(t)}$ periodically alternates between $\{\mW,\ \mI\}$, it also reduces communication frequency. The full detail is concluded in Algorithm~\ref{algo: periodical_gt}~(Appendix~\ref{ap: periodical_gt}).
\vspace{-2mm}

\paragraph*{$K$\nobreakdash-GT suffers from less noise than Periodical GT.} It is possible to reformulate local steps of Periodical GT as corrected SGD, same as that for $K$\nobreakdash-GT. But Periodical GT has different update for correction at communication with
\begin{equation}
		\mC^{(t+1)}=\mC^{(t)}\mW + \nabla F(\mX^{(t)+K-1}; \xi^{(t)+K-1})(\mW-\mI)\,.
	\label{eq: tv_gt}
\end{equation} The equivalence of reformulation is proven in Appendix~\ref{ap: reformulation}.

However, if we simply reformulate equation (\ref{eq: k-gt}), we obtain that $K$\nobreakdash-GT uses the average of $K$ stochastic gradient, 
i.e., $$\mC^{(t+1)}=\mC^{(t)}\mW + \tfrac{1}{K}\textstyle\sum_k\nabla F(\mX^{(t)+k}; \xi^{(t)+k})(\mW-\mI)\,,$$ which can reduce stochastic noise by $K$. Periodical GT uses only one stochastic gradient, thus would suffer more from stochastic noise. 

\paragraph*{Using more random samples on stochastic gradient can reduce noise in Periodical GT.}
 It is trivial to reduce the stochastic noise in~(\ref{eq: tv_gt}) if using more random samples $\{\xi^{(t),s}|s = 0,\ldots,K-1\}$ to replace $\nabla F(\mX^{(t)+K-1}; \xi^{(t)+K-1})$ with\begin{equation}\nonumber\nabla F(\mX^{(t)+K-1}; \xi^{(t)+K-1}) = \frac{1}{K}\sum_s\nabla F(\mX^{(t)+K-1}; \xi^{(t)+K-1,s})\,,\end{equation} then the correction $\mC^{(t)}$ has the same level of stochastic noise as $K$\nobreakdash-GT. However, using more sample to calculate SGD requires a lot more extra computation than $K$\nobreakdash-GT. 

\begin{theorem}[Periodical GT convergence]\label{thm: tv_gt}For schemes as in Algorithm~\ref{algo: periodical_gt}~(Appendix~\ref{ap: periodical_gt}) with mixing matrices such as in Assumption \ref{ass:p} and arbitrary error $\epsilon>0$, there exists a constant stepsize $\eta=O(\frac{p^2}{KL})$ such that under Assumption \ref{ass:l} and \ref{ass:noise} for $L$\nobreakdash-smooth, (possibly non-convex) functions, it holds $\frac{1}{T+1}\sum_t\mE||\nabla f(\bar\xx^{(t)})||^2\leq \epsilon$ after $$\cO\left(\frac{\sigma^2}{Kn\epsilon^2} +\frac{\sigma}{p^2 \epsilon^{\frac{3}{2}}}+\frac{1}{p^2 \epsilon}\right)\cdot L
$$ communication rounds.
Conversely, if we consider using the full-batch tracking Algorithm~\ref{algo: periodical_gt_grad}~(Appendix~\ref{ap: periodical_gt}), then the convergence rate can be improved to $$\cO\left(\frac{\sigma^2}{Kn\epsilon^2} +\frac{\sigma}{p^2\sqrt{K}\epsilon^{\frac{3}{2}}}+\frac{1}{p^2 \epsilon}\right)\cdot L \,.$$
\end{theorem}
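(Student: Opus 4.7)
}

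My plan is to run essentially the same Lyapunov argument that underlies Theorem~\ref{thm: k-gt}, and then pinpoint precisely where the single-sample tracking update of Periodical GT costs a factor of $\sqrt{K}$ in the middle term. Concretely, I would define a potential
\begin{equation*}
\Phi_t \;:=\; \E\bigl[f(\bar\xx^{(t)})\bigr] - f^{\star} + \tfrac{\alpha}{n}\,\E\|\mX^{(t)}-\bar\mX^{(t)}\|_F^2 + \tfrac{\beta}{n}\,\E\|\mZ^{(t)}-\bar\mZ^{(t)}\|_F^2,
\end{equation*}
and close a one–communication-round recursion $\Phi_{t+1}\le\Phi_t-c\eta K\E\|\nabla f(\bar\xx^{(t)})\|^2+(\text{noise})$ with constants $\alpha,\beta>0$ tuned to cancel the cross-terms between consensus error and tracking error. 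Because proper initialization gives $\bar\mZ^{(t)}=\bar\mG^{(t)}$ throughout, the mean iterate $\bar\xx^{(t)}$ evolves by SGD on the averages, so the descent lemma for $f(\bar\xx^{(t+1)})$ reduces to the standard $L$-smooth bound with the usual $\sigma^2/n$ per-step variance plus a consensus-error penalty $L^2\cdot\Xi_t$.

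The heart of the argument is the pair of recursions for $\Xi_t:=\frac{1}{n}\E\|\mX^{(t)}-\bar\mX^{(t)}\|_F^2$ and $\Psi_t:=\frac{1}{n}\E\|\mZ^{(t)}-\bar\mZ^{(t)}\|_F^2$. First I would unroll the $K$ local steps, showing that during the non-communicating phase the consensus distance inflates by at most $O(K^2\eta^2\,\Psi_t)$ plus a drift term $O(K\eta^2\sigma^2)$, and that at the communication step $\mW$ contracts by factor $(1-p)$ (Assumption~\ref{ass:p}). Second, I would expand the tracking recursion $\mZ^{(t+1)} = \mZ^{(t)}\mW + \mG^{(t+1)}-\mG^{(t)}$ in the form of Proposition~\ref{cor: local_gt}'s analog for Periodical GT, and invoke $L$-smoothness to control $\|\nabla f(\mX^{(t+1)})-\nabla f(\mX^{(t)})\|_F^2$ by consensus error plus $K\eta^2 L^2(\|\nabla f(\bar\xx^{(t)})\|^2+\text{bias})$, while separately controlling the stochastic fluctuation of $\mG^{(t+1)}-\mG^{(t)}$.

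This last variance term is the pivotal place. For Periodical GT, $\mG^{(t)}=\nabla F(\mX^{(t)+K-1};\xi^{(t)+K-1})$ is a single stochastic sample, so $\E\|\mG^{(t)}-\nabla f(\mX^{(t)+K-1})\|_F^2\le n\sigma^2$, injecting a $\sigma^2$ noise floor into $\Psi_{t+1}$ every communication round. For the full-batch variant (Algorithm~\ref{algo: periodical_gt_grad}) the tracking is updated with $\nabla f_i$, killing that term, and the only residual stochasticity in $\Psi_t$ enters through the drifts of the $K$ local SGD steps—exactly the situation in $K$-GT, which averages $K$ i.i.d.\ samples and therefore enjoys the $\sigma^2/K$ effective variance. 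Substituting the two noise levels into the combined recursion and unrolling over $T$ rounds gives, after dividing by $\eta K T$,
\begin{equation*}
\tfrac{1}{T+1}\sum_{t=0}^{T}\E\|\nabla f(\bar\xx^{(t)})\|^2 \;\lesssim\; \tfrac{F_0}{\eta K T}+\tfrac{\eta L\sigma^2}{n}+\tfrac{\eta^2 L^2 K}{p^2}\cdot\sigma_{\text{eff}}^2,
\end{equation*}
with $\sigma_{\text{eff}}^2=\sigma^2$ in the stochastic case and $\sigma_{\text{eff}}^2=\sigma^2/K$ in the full-batch case; feeding this into the usual stepsize-tuning lemma (the one that takes $A/\eta T+B\eta+C\eta^2$ to its balanced optimum) yields the three terms in the two rates, with the stated $\eta=O(p^2/(KL))$ arising from the consensus–tracking contraction constraint.

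The main obstacle I anticipate is bookkeeping the cross-contamination between $\Xi_t$ and $\Psi_t$ across the $K$ local steps without communication: one must show that the $(1-p)$ contraction applied every $K$ rounds is strong enough to dominate the $K$-fold accumulation during the silent phase, which forces a stepsize of order $p^2/(KL)$ rather than $p/(KL)$ as in $K$-GT (where the split stepsizes $\eta_c$ and $\eta_s$ decouple drift from mixing). A secondary subtlety is verifying that $\bar\cc^{(t)}\equiv\0$ under the stated initialization for both algorithm variants so that the descent lemma on $\bar\xx^{(t)}$ is genuinely an SGD descent, and that this initialization is compatible with the tracking recursion used in Algorithm~\ref{algo: periodical_gt_grad}.
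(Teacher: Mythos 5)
Your proposal follows essentially the same route as the paper's proof: a per-round Lyapunov recursion combining $f(\bar\xx^{(t)})-f^{\star}$, the consensus distance $\Xi_t$, and a tracking-error term, closed by the standard $A/(\eta T)+B\eta+C\eta^2$ stepsize-tuning lemma, with the $\sqrt{K}$ gap between the two rates attributed to exactly the same source, namely the full $\sigma^2$ variance injected into the tracking recursion by the single-sample update versus the $K$-fold reduced (or vanishing) injection in the full-batch variant. The only cosmetic difference is that the paper first reformulates Periodical GT as corrected SGD with a round-constant correction $\mC^{(t)}$ and measures its quality via $\gamma_t=\frac{1}{nL^2}{\mathbb E}\|\mC^{(t)}+\nabla f(\bar\mX^{(t)})(\mI-\mJ)\|_F^2$ rather than tracking $\frac{1}{n}{\mathbb E}\|\mZ^{(t)}-\bar\mZ^{(t)}\|_F^2$ directly, which is the same quantity in substance.
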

\noindent Note that the latter result refers to full batch tracking which comes at additional computation cost each communication round (in contrast to $K$-GT).


\subsubsection{Gradient Tracking with Large Batch (Large-batch GT)}
Apart from local training, large-batch training is also popular to achieve acceleration in distributed setting. Similar to Large-batch SGD, we calculate $\mG^{(t)} = \sum_k\nabla F(\mX^{(t)}; \xi^{(t), k})$ in~(\ref{eq: dsgt}) with $K$ \iid random samples, $\{\xi^{(t),k}|k = 0,\ldots,K-1\}$ and make $\mW^{(t)} = \mW$.  It is theoretically workable to improve the asymptotical communication rounds needed to reach the desired accuracy $\epsilon$ from $\cO\big(\frac{\sigma^2}{n\epsilon^2}\big)$~\citep{koloskova2021improved} to $\cO\big(\frac{\sigma^2}{nK\epsilon^2}\big)$, while remains heterogeneity-independent.

We empirically show that Large-batch GT remains heterogeneity-independent and has the same communication performance as $K$\nobreakdash-GT ~(Figure~\ref{fig: simulation_convex},~\ref{fig: large}).  

\vspace{-4mm }
\subsection{Convergence comparison}

We summarized the convergence rate for the related decentralized algorithms  in Table~\ref{tab: convergence}. In order to analyze the convergence for other methods that depend on data heterogeneity, there is an addition assumption to measure data heterogeneity~\cite{scaffold,unified-dsgd}.
\begin{table}[tb]
\caption{The comparison of communication rounds needed to reach target accuracy $\epsilon$ on non-convex functions. Our results on both $K$-GT, Periodical GT improve the rate of D-SGD in terms of heterogeneity parameter $\bar\zeta^2$(defined in Ass. \ref{ass:zeta}) when using local steps, and accelerates the rate of GT. }
	\centering
	\renewcommand{\arraystretch}{1.2}
	\resizebox{0.75\textwidth}{!}{
		\begin{tabular}{l|l|ll}		
			\toprule[1.1pt]
			\multicolumn{1}{l}{\textbf{Local steps}} &\multicolumn{2}{l}{\textbf{Algorithm}} & \textbf{Communication rounds}	\\\midrule[0.8pt]
			$K=1$ &\multicolumn{2}{l}{GT~\citep{koloskova2021improved}} & $\cO\Big(\frac{\sigma^2}{n \epsilon^2} +\frac{\sigma}{p^{\frac{3}{2}} \epsilon^{\frac{3}{2}}}+\frac{1}{p^2 \epsilon}\Big)$ \\\cline{1-4}
			\multirow{4}{*}{$K>1$}&\multicolumn{2}{l}{D\nobreakdash-SGD~\citep{unified-dsgd}} &$\cO\Big(\frac{\sigma^2}{Kn \epsilon^2} +(\frac{\bar\zeta}{p}+\frac{\sigma}{\sqrt{pK}})\frac{1}{\epsilon^{\frac{3}{2}}}+\frac{1}{p \epsilon}\Big)$ \\\cline{2-4}
			&\multicolumn{2}{l}{$K$\nobreakdash-GT {\textbf{[ours]}}}&  $\cO\Big(\frac{\sigma^2}{Kn \epsilon^2} +\frac{\sigma}{p^2\sqrt{K} \epsilon^{\frac{3}{2}}}+\frac{1}{p^2 \epsilon}\Big)$\\\cline{2-4}
			&\multicolumn{2}{l}{Periodical GT {\textbf{[ours]}}}&  $\cO\Big(\frac{\sigma^2}{Kn \epsilon^2} +\frac{\sigma}{p^2 \epsilon^{\frac{3}{2}}}+\frac{1}{p^2 \epsilon}\Big)$\\\cline{2-4}
			&\multicolumn{2}{l}{Periodical GT w/ full gradient {\textbf{[ours]}}}&  $\cO\Big(\frac{\sigma^2}{Kn \epsilon^2} +\frac{\sigma}{p^2\sqrt{K} \epsilon^{\frac{3}{2}}}+\frac{1}{p^2 \epsilon}\Big)$\\
			\bottomrule[1.1pt]
		\end{tabular}
		} 
\label{tab: convergence}		
\end{table}
\vspace{-2mm }
\paragraph*{$K$\nobreakdash-GT achieves acceleration by local steps in high-noise regime.} When $\epsilon$ is sufficiently small, the noise dominates the convergence rate ($\sigma>0$) and it is not affected by graph parameter $p$ for GT, Periodical GT and $K$\nobreakdash-GT. Then after enough transient time, Periodical GT and $K$\nobreakdash-GT with $\cO(\frac{\sigma^2}{nK\epsilon^2})$ achieves linear speedup by $K$ compared to GT with rate $\cO(\frac{\sigma^2}{n\epsilon^2})$.
In addition, the transient time for $K$\nobreakdash-GT also decreases with $\cO(\frac{1}{\sqrt{K}})$ comparing to GT baseline.

GT methods are in general more sensitive to the network parameter than diffusion methods~\citep{primal>primal-dual}, e.g., D\nobreakdash-SGD, in the non-asymptotical regime. In our analysis of $K$\nobreakdash-GT, the dependency on the network parameter $p$ is worse than for vanilla GT. Combining our analysis with the tighter analysis of GT presented in concurrent work~\cite{koloskova2021improved} would be an interesting future direction---however in this work we focused on the aspect of equipping GT with local steps.

\vspace{-2mm }
\paragraph*{The impact of data heterogeneity is removable for $K$\nobreakdash-GT .} 
$K$-GT does not completely solve data heterogeneity in general, and depends on the data heterogeneity at the initial point, which is the same case in GT. It has been proven for GT that in the non-asymptotic regime a weaker dependence on the data heterogeneity at the initial point actually remains~\cite{koloskova2021improved}. However, with a single round of global communication for the initial iterates (e.g. in Alg.~\ref{algo: k-gt}), we can remove the heterogeneity from the complexity estimates for GT, $K$-GT and Periodical GT. Table~\ref{tab: convergence} removes the initialization terms from the rate to simplify the presentation. On the contrary, heterogeneity $\bar\zeta^2$ under no circumstance can be eliminated for D-SGD and slows down its convergence.

\vspace{-2mm }
\paragraph*{Periodical GT suffers more from noise comparing to $K$\nobreakdash-GT.} In asymptotical regime, the transient time for $K$\nobreakdash-GT $\cO(\frac{\sigma}{\sqrt{K}})$ decrease with local steps while Periodical GT $\cO(\sigma)$ does not. But this noise term can be improved as discussed in section~\ref{sec:perodical_gt}.  If we consider a full gradient in equation~(\ref{eq: tv_gt}), Periodical GT performs similar to $K$\nobreakdash-GT at the expense of extra computation.
\section{Experimental results}
\label{sec:experiments}
We evaluate the effectiveness of $K$-GT by comparing it with D-SGD and periodical GT.
\vspace{-3mm}
\subsection{Setting}
We conduct experiments in two settings. 
\begin{enumerate}
	\item \textsc{Synthetic datasets:} We first construct the distributed least squares objective with $f_i(\xx)=\frac{1}{2}||\mA_i\xx-\bb_i||^2$ with fixed Hessian $\mA_i^2=\frac{i^2}{n}\cdot \mI_d$, and sample each $\bb_i\sim\cN(0,\frac{\bar\zeta^2}{i^2}\cdot\mI_d)$ for each client $i\in[n]$, where $\bar\zeta^2$ can control the deviation between local objectives~\citep{unified-dsgd}. Stochastic noise is controlled by adding Gaussian noise with $\sigma^2=1$.  
	\item \textsc{Real-world dataset, mnist~\cite{mnist}}: We test the case that all clients collaboratively train a convolutional neural network~(CNN)\footnote{Here we only consider a very simple network without Batch Norm layers~\citep{batchnorm} for simplicity, since it inherently assumes that the data distribution is uniform across different batches, which is not the case that we are interested in. The detailed network structure is listed in Appendix~\ref{appendix: network}.} on real-world dataset, \textsc{mnist}. In total this dataset contains 60,000 images of size 28×28 and 10 labels. \end{enumerate}
We use a ring topology for both sets of experiments. For simplicity, instead of using the initialization in Alg.~\ref{algo: k-gt}, we initialize $\cc_i=\zero$ for all experiments.
For data partition on \textsc{mnist}, we consider both homogeneous and heterogeneous cases. The homogeneous dataset is first shuffled and then
uniformly partitioned among all the clients. We call this the `random' setting. The heterogeneous datasets is created when each client only has exclusive access to subset of classes. We call this the `sorted' case, and the data variation across clients is maximized at this time. We use $n=5$ and $n=10$ clients and each client has access to one and two classes case accordingly, and the case $n=10$ has more severe heterogeneity condition than the case $n=5$.
\vspace{-4mm }
\paragraph*{Parameter tuning.} For \textsc{Synthetic datasets}, we use the same learning rate $\eta_s$=1 and $\eta$=1e-3. For \textsc{mnist}, we use the best constant learning rate tuned from $\{$0.5, 0.1, 0.05, 0.01, 0.005, 0.001$\}$ for algorithms and batch size 128 on each client. Note that even though our algorithm is purposed with constant learning rate, using more sophisticated and time-varying learning rate scheduler would definitely bring much better performance. 
\vspace{-4mm }
\paragraph*{Comparison.} We mainly illustrate the acceleration and robustness in convergence rate of the $K$\nobreakdash-GT compared to baseline D\nobreakdash-SGD. We also consider the performance of several GT-variants that supports local steps discussed in Section~\ref{sec: discuss}. 

\vspace{-2mm }
\subsection{Numerical results}
\begin{figure}
	\centering
	\includegraphics[width=1\textwidth]{./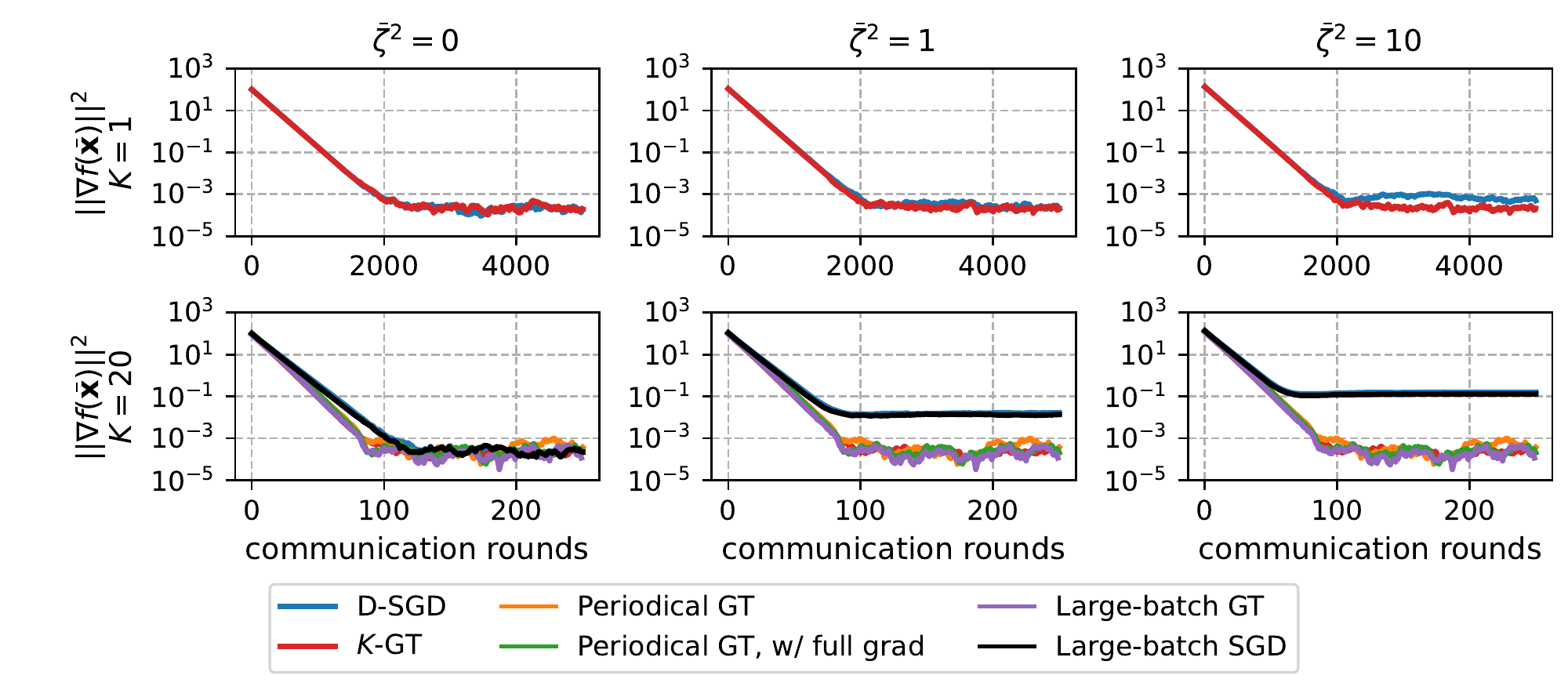}
	\caption{Training \textbf{\textsc{synthetic}} convex functions over ring by 10 clients with noise $\sigma^2 = 1$. In total 5000 communication rounds for $K=1$~(top row) while only 250 rounds for $K=20$~(bottom row). All uses the same learning rate and are averaged by three repetitions. The client\nobreakdash-drift for D\nobreakdash-SGD is even more severe with increasing heterogeneity~(larger $\bar\zeta$) as well as $K$. $K$\nobreakdash-GT, GT, and GT w/ full grad are consistent for different $\bar\zeta^2$ while achieving communication reduction when $K>1$.  }
	\label{fig: simulation_convex}
\end{figure}

\begin{figure}
	\begin{subfigure}{0.5\textwidth}
		\centering
  		\includegraphics[width=1\textwidth]{./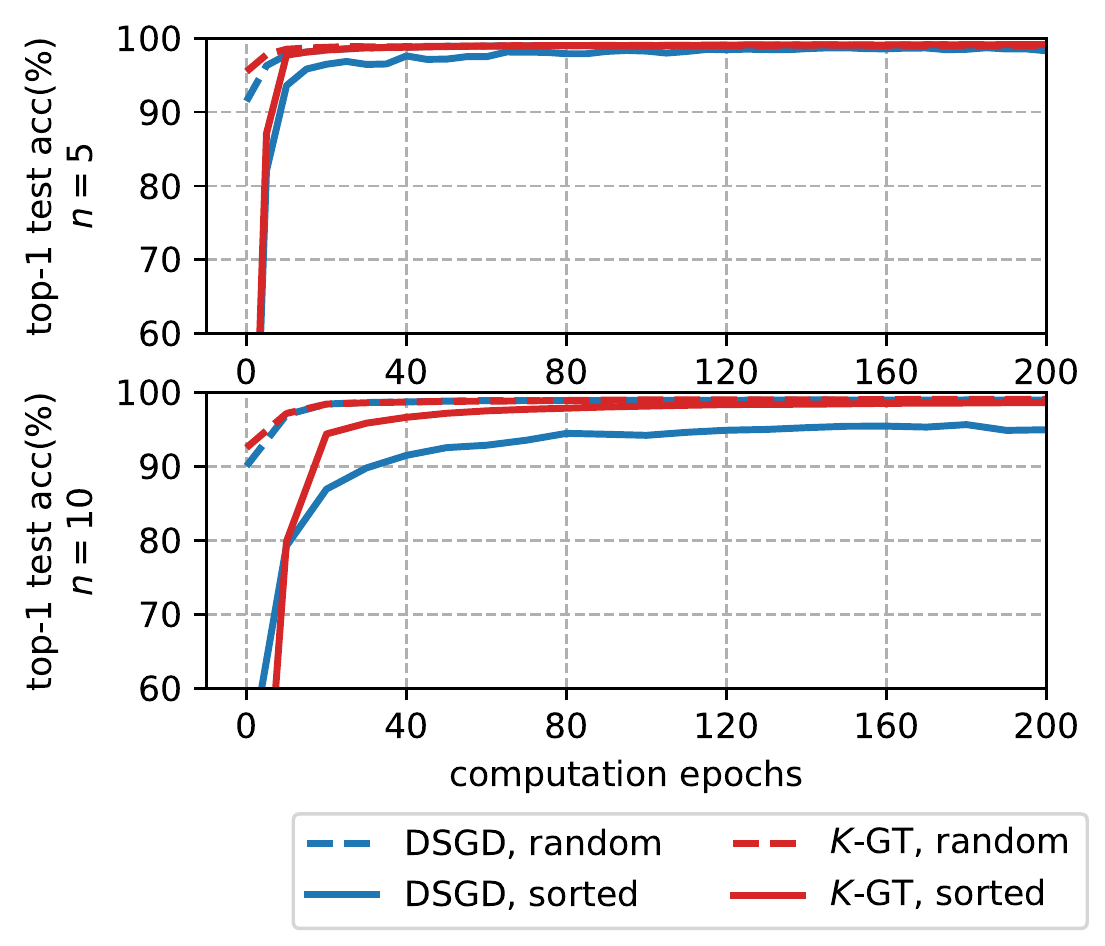}
		\caption{$K=1$, and data is partitioned either random or sorted.}
		\label{fig: baseline}
	\end{subfigure}
	\hfill
	\begin{subfigure}{0.5\textwidth}
		\centering
  		\includegraphics[width=1\textwidth]{./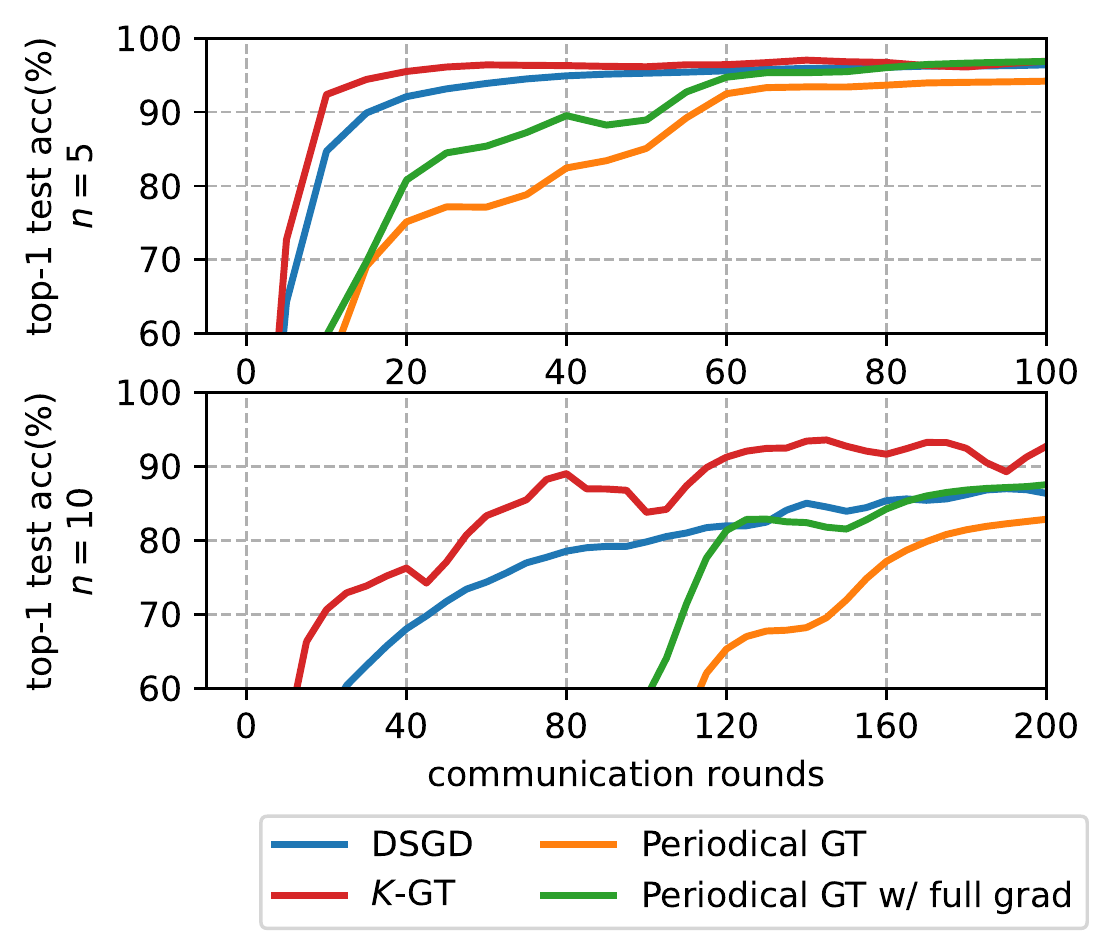}
		\caption{$K=\tfrac{1}{n}$ epoch, and data is partitioned sorted. }
		\label{fig: local_steps}
	\end{subfigure}
	\caption{{Generalization performance} on \textbf{\textsc{mnist}} among D\nobreakdash-SGD~(blue), $K$\nobreakdash-GT(red), GT w/o~(orange) and w/~(green) full gradient for $n=5$~(top row) and $n=10$~(bottom row) clients. The $x$-axis corresponds to (a) the number of pass over overall dataset(epoch), and (b) the number of communication rounds. 
	In (a), when $K=1$, $K$-GT and Periodical GT are identical to GT baseline~(remark \ref{remark: equivalence}). 
	Learning rates are tuned to be the best. Note that 1~epoch of passing over global dataset is equivalent to 470 times computation on SGD when mini-batch sized 128. And for (b), since $nK=470$ is fixed for both $n=5$~(top right) and $n=10$~(top left), the number of local steps between communication rounds is $K_{n=5} = 2K_{n=10}$.}
	\label{fig: mnist}
\end{figure}

\paragraph*{$K$-GT is the most robust against heterogeneity.} In the convex case, client drift only happens for D\nobreakdash-SGD suggested by Figure~\ref{fig: simulation_convex} in which the larger value $\bar\zeta^2\neq 0$ gets, the poorer model quality D\nobreakdash-SGD ends up with. However, $K$\nobreakdash-GT, Periodical GT (w/ and w/o full grad) and Large\nobreakdash-batch GT do not suffer from 'client\nobreakdash-drift' and ultimately reach the consistent level of model quality regardless of increasing of $\bar\zeta$ and $K$ (number of either local steps or random samples). 
In the non-convex case, since it's known the optimality condition and optimization trajectory is more complex than the convex case, generalization performance of all methods in Figure~\ref{fig: mnist} cannot fully recover the baseline performance when data partition is random.  However, $K$-GT could always outperform when data partition is non-\iid and the improvement is more significant when the degree of heterogeneity is increasing from Figure~\ref{fig: local_steps}.

\paragraph*{Local step reduces communication. } From $K=1$ to $K=20$ in Figure~\ref{fig: simulation_convex}, $K$\nobreakdash-GT and other GT alternatives reach the same target after 2000 rounds to only 100 rounds, achieving linear reduction in communication with the help of local steps. However, more local steps makes D\nobreakdash-SGD suffer even more in model quality. At the same time, introducing local steps into the training of non-convex functions would still achieve communication reduction but not by a linear factor of $K$ as in the convex case. Within Figure \ref{fig: local_steps}, we fixe $nK=1$ epoch over the data such that for no matter which $n=5$ or $n=10$ client communicates once after 1 epoch of computation. Compared to $K=1$ in Figure~\ref{fig: simulation_convex}, the acceleration when $K>1$ is still by a huge amount. However, note that introducing more local steps when data partition is heterogeneous would result in more severe quality loss, but $K$-GT still outperforms. 

\paragraph*{Large-batch GT has similar performance to tracking with local steps.} From both convex~(Figure~\ref{fig: simulation_convex}) and non-convex~(Figure~\ref{fig: large}) functions, either $K$-GT or Large-batch GT, after the same number of communication rounds while the simultaneously the same number of computation epochs, reaches the similar level of accuracy. But for D-SGD, training with local steps could be even more stable and generalizes better than the Large-batch, which has been empirically investigated in~\citep{largebatch}.

\begin{figure}
	\centering
  	\includegraphics[width=0.3\textwidth]{./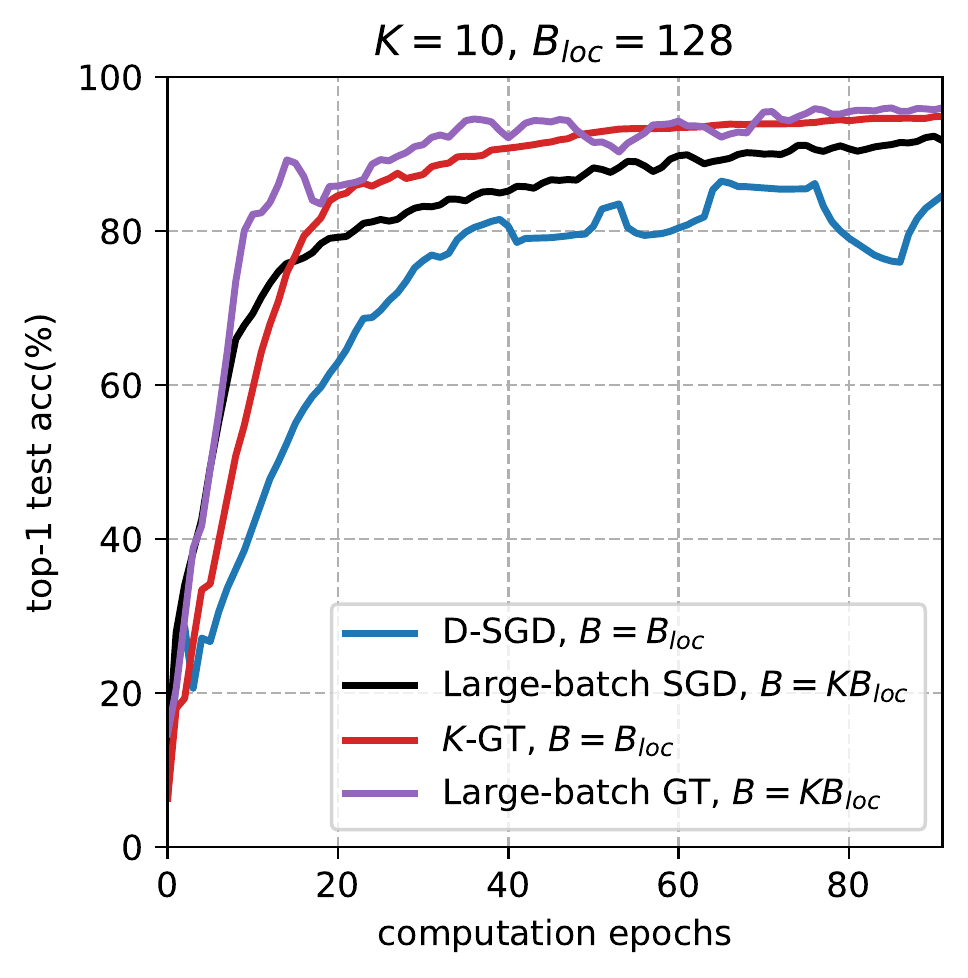}
	\caption{{Generalization performance comparison} on \textbf{\textsc{mnist}} between large-batch training and training with local steps, where large-batch training uses $KB_{loc}$ local batch size and communicates every update, while training with local steps uses $B_{loc}$ local batch size and communicates periodically every $K$ local update.}
	\label{fig: large}
\end{figure}

\vspace{-4mm }
\section{Conclusion}
Decentralized learning is a promising building block for the democratization of Deep Learning.
Especially in Edge AI applications, users' data does not follow a uniform distribution. This requires
robustness of decentralized learning algorithms to data heterogeneity.

We propose a novel decentralized optimization algorithm ($K$\nobreakdash-GT) supports communication efficient local update steps and overcomes data dissimilarity.
The tracking mechanism uses the accumulated gradient sum, akin to momentum, thereby reducing variance across local updates without the need of large batch sizes.
We demonstrated the superiority of $K$\nobreakdash-GT with both convergence guarantees and empirical evaluations. 


\clearpage
\bibliographystyle{tfs}
\bibliography{reference}
\clearpage


\newpage
\appendix


%
\hypersetup{linkcolor=red}
\setcounter{page}{1}

\section{Algorithm}
\setlength{\abovedisplayskip}{2pt}
\setlength{\belowdisplayskip}{0.5pt}
 
 \subsection{Periodical Algorithm}\label{ap: periodical_gt}

\begin{algorithm}[H]
\begin{minipage}{\linewidth}
	\caption{Periodical GT: GT with periodical communication}
	\label{algo: periodical_gt}
	\begin{algorithmic}[1]
		\State \textbf{parameters:}
		\State $T$: number of communication; $K$: number of local steps; $\eta_s, \eta_c$: communication, local stepsize; $\mW$: given topology.
		\State \textbf{Initialize:} $\xx^0_i = \xx^0_j,\ \zz^0_i = \frac{1}{n}\sum_iF_i(\xx^0; \xi_i) = \zz^0_j,\ \forall\ i,j\in[n]$\footnote{This initialization in tracking variable $\zz_i^{(0)}$ is required for heterogeneity-independent analysis in theory. In fact, we show later with experiment that $\zz_i^{(0)} = \nabla F_i(\xx^{0}; \xi)$ works well in practice.}
		\For{node $i\in\{1,...,n\}$ parallel}
		\For{\textbf{communication:} $t\gets0$ to $T-1$}
			\For{\textbf{local steps:} $k\gets0$ to $K-2$}
				\State $\xx_i^{(t)+k+1} = \xx_i^{(t)+k} - \eta_c\zz_i^{(t)+k}$
				\State $\zz_i^{(t)+k+1} = \zz_i^{(t)+k} + \nabla F_i(\xx_i^{(t)+k+1}; \xi_i^{(t)+k+1}) - \nabla F_i(\xx_i^{(t)+k}; \xi_i^{(t)+k})$
			\EndFor
			\State $\xx_j^{(t)+K} = \xx_i^{(t)+K-1} - \eta_c\zz_i^{(t)+K-1} $
			\State $\xx_i^{(t+1)} = \sum_jw_{ij}\Big(\xx_j^{(t)}-\eta_s(\xx_j^{(t)}-\xx_j^{(t)+K})\Big)$
			\State $\zz_i^{(t+1)} = \sum_jw_{ij}\zz_j^{(t)+K-1} + \nabla F_i(\xx_i^{(t+1)}; \xi_i^{(t+1)}) - \nabla F_i(\xx_i^{(t)+K-1}; \xi_i^{(t)+K-1})$
		\EndFor
		\EndFor
	\end{algorithmic}
\end{minipage}
\end{algorithm}
\begin{algorithm}[H]
	\begin{minipage}{\linewidth}
	\caption{Periodical GT with full-batch gradient}
	\label{algo: periodical_gt_grad}
	\begin{algorithmic}[1]
		\State \textbf{parameters:}
		\State $T$: number of communication; $K$: number of local steps; $\eta_s, \eta_c$: communication, local stepsize; $\mW$: given topology.
		\State \textbf{Initialize:} $\xx^0_i = \xx^0_j,\ \cc^0_i = -\nabla F_i(\xx^0; \xi_i) + \frac{1}{n}\sum_j\nabla F_j(\xx^{0}; \xi_j)$\footnote{This initialization in correction $\cc_i^{(0)}$ is required for heterogeneity-independent analysis in theory. In fact, we show later with experiment that $\cc_i^{(0)} = \zero$ works well in practice.}
		\For{node $i\in\{1,...,n\}$ parallel}
		\For{\textbf{communication:} $t\gets0$ to $T-1$}
			\For{\textbf{local steps:} $k\gets0$ to $K-2$}
				\State $\zz_i^{(t)+k} = \nabla F_i(\xx_i^{(t)+k}; \xi_i^{(t)+k}) + \cc_i^{(t)}$
				\State $\xx_i^{(t)+k+1} = \xx_i^{(t)+k} - \eta_c\zz_i^{(t)+k}$
			\EndFor
			\State Compute full gradient on $\xx_i^{(t)+K-1}$, $g_i = \nabla f_i(\xx_i^{(t)+K-1})$.
			\State $\xx_j^{(t)+K} = \xx_i^{(t)+K-1} - \eta_c(g_i + \cc_i^{(t)})$
			\State $\xx_i^{(t+1)} = \sum_jw_{ij}\Big(\xx_j^{(t)+K-1}-\eta_s(\xx_j^{(t)}-\xx_j^{(t)+K})\Big)$
			\State $\cc_i^{(t+1)} = \sum_jw_{ij}\cc_j^{(t)} + \sum_jw_{ij}g_j - g_i$
		\EndFor
		\EndFor
	\end{algorithmic}
\end{minipage}
\end{algorithm}

\section{Proof of proposition}
In this section, we will prove the propositions previously discussed.
\vspace{-3mm}
\subsection{Tracking Property of $K$-GT}\label{ap: gt}
\localgt*
\begin{proof}
The updating schemes of the model for $K$-GT are shown in equation (\ref{eq: k-gt}), then if we define $\mZ^{(t)}=\frac{1}{K\eta_c
}\Big(\mX^{(t)} - \mX^{(t)+K}\Big)$, and $\eta=\eta_s\eta_c$, then with simply reformulating we could derive the set of equations shown above.
\end{proof}

\subsection{Periodical Gradient Tracking reformulation}
\label{sec:periodical_gt}
The periodical GT is actually time-varying GT with skipping communication. That is $\mW^{(t)}=\mW$ in equation (\ref{eq: dsgt}) when mod($t,\ K$)=0, otherwise $\mW^{(t)}=\mI$ no communication and local step. 

Then we adopt the notation for $K$-GT that we denote the model at $k$-th local step after $t$-th communication round as $\mX^{(t)+k}$. And the same principle is applied to tracking variable $\mZ^{(t)+k}$. 
In the following sections, we will first show that Periodical GT can be equivalently reformulated and corrected SGD with constant correction throughout local steps, and then provide the update scheme for both correction and model.
\subsubsection{Corrected SGD}\label{ap: reformulation}

\begin{claim}\label{claim: csgd}
	 The local tracking variable during local steps can be equivalently rewritten as corrected SGD with correction, i.e., $$\mZ^{(t)+k+1} = \nabla F(\mX^{(t)+k+1};\xi^{(t)+k+1})+\underbrace{\mZ^{(t)+k}-\nabla F(\mX^{(t)+k}; \xi^{(t)+k})}_{\mC^{(t)+k}}.$$ And the correction $\mC$ remains unchanged throughout local steps, i.e., $\mC^{(t)+k+1}  = \mC^{(t)+k},\quad \forall k\in\{0,\ ...,\ K-1\}$,  and is updated only  at each time of communication.
\end{claim}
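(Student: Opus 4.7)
The proof is essentially a direct algebraic rearrangement of the tracking update in Algorithm~\ref{algo: periodical_gt}, so the plan is short and computational rather than structural.

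First I would start from the local-step update rule written in matrix form,
$$\mZ^{(t)+k+1} = \mZ^{(t)+k} + \nabla F(\mX^{(t)+k+1};\xi^{(t)+k+1}) - \nabla F(\mX^{(t)+k};\xi^{(t)+k}),$$
which is obtained by stacking the per-node lines $\zz_i^{(t)+k+1} = \zz_i^{(t)+k} + \nabla F_i(\xx_i^{(t)+k+1};\xi_i^{(t)+k+1}) - \nabla F_i(\xx_i^{(t)+k};\xi_i^{(t)+k})$ inside the inner loop of Algorithm~\ref{algo: periodical_gt}. Simply moving $\nabla F(\mX^{(t)+k};\xi^{(t)+k})$ to the left and regrouping yields
$$\mZ^{(t)+k+1} = \nabla F(\mX^{(t)+k+1};\xi^{(t)+k+1}) + \bigl[\mZ^{(t)+k} - \nabla F(\mX^{(t)+k};\xi^{(t)+k})\bigr],$$
which is exactly the corrected-SGD form claimed, with the bracketed quantity identified as $\mC^{(t)+k}$.

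Next I would verify that $\mC^{(t)+k}$ is in fact constant in $k$. By the defining equation $\mC^{(t)+k} := \mZ^{(t)+k} - \nabla F(\mX^{(t)+k};\xi^{(t)+k})$, the definition at step $k+1$ reads
$$\mC^{(t)+k+1} = \mZ^{(t)+k+1} - \nabla F(\mX^{(t)+k+1};\xi^{(t)+k+1}).$$
Substituting the expression for $\mZ^{(t)+k+1}$ from the previous step cancels the $\nabla F(\mX^{(t)+k+1};\xi^{(t)+k+1})$ terms and leaves $\mC^{(t)+k+1} = \mZ^{(t)+k} - \nabla F(\mX^{(t)+k};\xi^{(t)+k}) = \mC^{(t)+k}$. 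Iterating this identity over $k=0,\dots,K-1$ gives the constancy within a communication round.

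There is no real obstacle here: the claim is a one-line regrouping combined with a telescoping cancellation. The only thing worth flagging in the write-up is that the correction $\mC$ only changes at communication steps (line 11 of Algorithm~\ref{algo: periodical_gt}), which is consistent with the parallel structure used for $K$-GT in the main text and motivates the later displayed update (\ref{eq: tv_gt}) for $\mC^{(t+1)}$. If desired, I would close the proof by briefly deriving (\ref{eq: tv_gt}) from the mixing step $\mZ^{(t+1)} = \mZ^{(t)+K-1}\mW + \nabla F(\mX^{(t+1)};\xi^{(t+1)}) - \nabla F(\mX^{(t)+K-1};\xi^{(t)+K-1})$ together with the constancy of $\mC$ through the $K{-}1$ local steps, to make the reformulation complete.
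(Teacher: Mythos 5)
Your proposal is correct and follows essentially the same route as the paper: both arguments reduce to rearranging the local tracking update $\mZ^{(t)+k+1} = \mZ^{(t)+k} + \nabla F(\mX^{(t)+k+1};\xi^{(t)+k+1}) - \nabla F(\mX^{(t)+k};\xi^{(t)+k})$ and observing that the bracketed quantity $\mC^{(t)+k} = \mZ^{(t)+k} - \nabla F(\mX^{(t)+k};\xi^{(t)+k})$ telescopes to a constant over the local steps. The only cosmetic difference is that the paper wraps this identical computation in a (superfluous) proof-by-contradiction, whereas your direct version is cleaner.
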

\begin{proof}
We know that local model is updated with $\mZ$ instead of $\nabla F(\mX; \xi)$. We define the deviation of $\mZ$ from the SGD as $\mC$. By contradiction we assume that deviation is different for each local iterate $(t)+k$. That's $\mC^{(t)+k+1} \neq \mC^{(t)+k}.$ Then for each local update, we have 
\begin{equation}\nonumber
	\begin{aligned}
		\mZ^{(t)+k+1} & = \mZ^{(t)+k} + \nabla F(\mX^{(t)+k+1};\xi^{(t)+k+1}) - \nabla F(\mX^{(t)+k}; \xi^{(t)+k})  \\ 
		\mZ^{(t)+k+1}  - \nabla F(\mX^{(t)+k+1};\xi^{(t)+k+1}) & = \mZ^{(t)+k}  - \nabla F(\mX^{(t)+k}; \xi^{(t)+k})  \\ 
		\mC^{(t)+k+1} & = \mC^{(t)+k},\quad \forall k\in\{0,\ ...,\ K-1\}
		\end{aligned}
\end{equation}
which contradicts the assumed fact that $\mC^{(t)+k+1} \neq \mC^{(t)+k}.$ 
\end{proof}
\subsubsection{Updating scheme reformulation}
\begin{proposition}\label{prop: tv_gt}
 If we additionally consider separate step sizes for local steps and communication,  we can equivalently rewrite Periodical GT as follows, 
 \begin{itemize}
\item\textbf{Local steps. }We consider local steps as corrected SGD. The correction $\mC\in\mathbb{R}^{d\times n}$ captures the difference between local update and communication update. For local steps, i.e, $\forall k\in\{0,...,\ K-1\}$, $\mX^{(t)+0}\equiv\mX^{(t)}$,
\begin{equation}\label{eq: k-gt, local}
	\mX^{(t)+k+1}= \mX^{(t)+k} - \eta_c(\nabla F(\mX^{(t)+k}; \xi^{(t)+k}) + \mC^{(t)}),
\end{equation}
where $\mC^{(t)}$ is constant for all local steps.

\item \textbf{Communication. }Then it synchronizes both $\mX$ and $\mC$,
\begin{equation}\label{eq: k-gt, comm}
	\begin{aligned}
		\mX^{(t+1)} & = \Big(\mX^{(t)}-\eta_s(\mX^{(t)} - \mX^{(t)+K})\Big)\mW	\\
		\mC^{(t+1)} & = \mC^{(t)}\mW + \nabla F(\mX^{(t)+K-1}; \xi^{(t)+K-1})(\mW -\mI)	\\
	\end{aligned}
\end{equation}
\end{itemize}
\end{proposition}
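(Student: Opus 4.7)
The plan is to start from the raw Periodical GT recursions (the specialization of (\ref{eq: dsgt}) with $\mW^{(t)}=\mI$ during local steps and $\mW^{(t)}=\mW$ at communication) and rewrite them in the corrected-SGD form claimed, treating the local phase and the communication phase separately. The key enabler has already been established in Claim~\ref{claim: csgd}: during the $K$ consecutive local iterates the quantity $\mC^{(t)+k} := \mZ^{(t)+k} - \nabla F(\mX^{(t)+k};\xi^{(t)+k})$ is invariant in $k$. I will therefore denote its common value by $\mC^{(t)}$ and use it to eliminate $\mZ$ during the local phase.

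For the local-step equation (\ref{eq: k-gt, local}), I would substitute $\mZ^{(t)+k} = \nabla F(\mX^{(t)+k};\xi^{(t)+k}) + \mC^{(t)}$ directly into the model recursion $\mX^{(t)+k+1}=\mX^{(t)+k}-\eta\mZ^{(t)+k}$ inherited from (\ref{eq: dsgt}) with $\mW^{(t)}=\mI$, with the identification $\eta=\eta_c$. This immediately produces (\ref{eq: k-gt, local}); since $\mC^{(t)}$ is the common constant from Claim~\ref{claim: csgd}, no further work is needed in this phase.

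For the communication step (\ref{eq: k-gt, comm}), I define $\mX^{(t)+K}:=\mX^{(t)+K-1}-\eta_c\mZ^{(t)+K-1}$, i.e.\ the ``virtual'' result of one more local step without mixing. Plugging this into the gradient-tracking communication update gives $\mX^{(t+1)}=\mX^{(t)+K}\mW$, which coincides with the claimed formula when $\eta_s=1$. To incorporate the separate communication stepsize $\eta_s$, I rewrite the identity $\mX^{(t)+K}\mW$ as $(\mX^{(t)}-(\mX^{(t)}-\mX^{(t)+K}))\mW$ and then generalize by scaling the increment by $\eta_s$; this is the point at which Periodical GT is strictly extended beyond the single-stepsize version, so I would explicitly note that $\eta_s=1$ reproduces the original algorithm. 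For the correction, I use $\mC^{(t+1)}=\mZ^{(t+1)}-\nabla F(\mX^{(t+1)};\xi^{(t+1)})$ together with the $\mZ$ recursion from (\ref{eq: dsgt}) at the communication round:
\begin{equation}\nonumber
\mZ^{(t+1)} = \mZ^{(t)+K-1}\mW + \nabla F(\mX^{(t+1)};\xi^{(t+1)}) - \nabla F(\mX^{(t)+K-1};\xi^{(t)+K-1}) \,.
\end{equation}
Subtracting $\nabla F(\mX^{(t+1)};\xi^{(t+1)})$ and substituting $\mZ^{(t)+K-1} = \nabla F(\mX^{(t)+K-1};\xi^{(t)+K-1})+\mC^{(t)}$ cancels the fresh gradient terms and leaves exactly $\mC^{(t)}\mW + \nabla F(\mX^{(t)+K-1};\xi^{(t)+K-1})(\mW-\mI)$, matching (\ref{eq: k-gt, comm}).

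The main subtlety is not in any single algebraic step but in keeping the bookkeeping consistent across three indexing conventions simultaneously: the original global counter in (\ref{eq: dsgt}), the $(t)+k$ indexing that distinguishes local iterates within a round, and the ``virtual'' iterate $\mX^{(t)+K}$ that underlies the $\eta_s$-generalization. I would therefore be careful to verify that $\mC^{(t+1)}$ as derived from the $\mZ$-recursion indeed coincides with the initial correction $\mC^{(t+1)+0}$ of the next round, so that the invariance from Claim~\ref{claim: csgd} propagates consistently from one communication round to the next; once this is checked, the two displays (\ref{eq: k-gt, local}) and (\ref{eq: k-gt, comm}) follow directly.
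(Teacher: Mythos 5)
Your proposal is correct and follows essentially the same route as the paper: invoke Claim~\ref{claim: csgd} to identify the constant correction $\mC^{(t)}=\mZ^{(t)+k}-\nabla F(\mX^{(t)+k};\xi^{(t)+k})$ during the local phase, then derive the communication update for $\mC^{(t+1)}$ by substituting this identity into the $\mZ$-recursion at the communication round, cancelling the fresh gradient terms. Your explicit remark that the $\eta_s$-scaled mixing step is a strict generalization recovering the original algorithm at $\eta_s=1$ is a slightly more careful statement of what the paper only notes in passing, but the argument is the same.
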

\begin{proof}By Claim~\ref{claim: csgd}, the local update is equivalent to corrected SGD. 
Note that different stepsizes $\eta_c$ and $\eta_s$ are used for model update of local step and communication. 

The correction $\mC$ is constant during local steps by Claim~\ref{claim: csgd}, then consider its update during communication.
\begin{equation}\nonumber
	\begin{aligned}
		\mZ^{(t+1)} & = \mZ^{(t)+K-1}\mW +  \nabla F(\mX^{(t+1)};\xi^{(t+1)}) - \nabla F(\mX^{(t)+K-1}; \xi^{(t)+K-1})	\\
		\Leftrightarrow(\nabla F(\mX^{(t+1)}; \xi^{(t+1)}) + \mC^{(t+1)}) & = \Big(\nabla F(\mX^{(t)+K-1}; \xi^{(t)+K-1}) + \mC^{(t)}\Big)\mW+  \nabla F(\mX^{(t+1)};\xi^{(t+1)}) - \nabla F(\mX^{(t)+K-1}; \xi^{(t)+K-1})	\\
		\Leftrightarrow  \mC^{(t+1)} & = \mC^{(t)}\mW + \nabla F(\mX^{(t)+K-1}; \xi^{(t)+K-1})(\mW - \mI)
	\end{aligned}
\end{equation}
\end{proof}
\vspace{-10mm}
\section{Proof of theorem}

\subsection{Technical tools}
In this section, we mainly introduce some analytical tools that help in convergence analysis.
\begin{proposition}[Implications of the smoothness Assumption~\ref{ass: smooth}] Assumption~\ref{ass: smooth} implies $\forall i$ and $\forall \xx,\ \yy\in\mathbb{R}^d$,
\begin{equation}\nonumber
||\nabla f_i(\xx) - \nabla f_i(\yy)||\leq L||\xx-\yy||.
\end{equation}
\end{proposition}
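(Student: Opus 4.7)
The plan is to derive Lipschitz continuity of $\nabla f_i$ from the quadratic upper bound in Assumption~\ref{ass: smooth}. The natural first move is to apply the assumption twice---with the pair $(\xx,\yy)$ taken in both orderings---and add the two resulting inequalities. This yields the one-sided ``monotonicity'' bound
\[
\langle \nabla f_i(\xx) - \nabla f_i(\yy),\, \xx - \yy \rangle \leq L\,||\xx - \yy||^2,
\]
which is a necessary ingredient but, by Cauchy--Schwarz alone, falls short of the Lipschitz conclusion.

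To close the gap I would employ the standard auxiliary-function argument. Define $g_i(\zz) := f_i(\zz) - \langle \nabla f_i(\xx), \zz\rangle$; this function inherits the quadratic upper bound with the same constant $L$, has gradient $\nabla g_i(\zz) = \nabla f_i(\zz) - \nabla f_i(\xx)$, and has $\xx$ as a critical point. Applying the quadratic upper bound to $g_i$ at the shifted point $\yy - L^{-1}\nabla g_i(\yy)$ gives
\[
g_i\bigl(\yy - L^{-1}\nabla g_i(\yy)\bigr) \leq g_i(\yy) - \tfrac{1}{2L}||\nabla g_i(\yy)||^2.
\]
Provided the left-hand side can be lower-bounded by $g_i(\xx)$, rearrangement yields $||\nabla f_i(\yy) - \nabla f_i(\xx)||^2 \leq 2L\bigl(g_i(\yy) - g_i(\xx)\bigr)$; combining with its $\xx\leftrightarrow\yy$ counterpart and the monotonicity inequality above, elementary algebra produces the target $||\nabla f_i(\yy) - \nabla f_i(\xx)|| \leq L\,||\yy - \xx||$.

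The main obstacle is precisely the step $g_i\bigl(\yy - L^{-1}\nabla g_i(\yy)\bigr) \geq g_i(\xx)$, which relies on $\xx$ being a global minimizer of $g_i$. This is automatic under convexity but is not guaranteed for the non-convex $f_i$ considered in this paper. The subtlety is well known: the one-sided quadratic upper bound alone does not imply a Lipschitz gradient in the non-convex regime without additional regularity (e.g., twice differentiability with $-L\mI \preceq \nabla^2 f_i \preceq L\mI$, from which $||\nabla^2 f_i|| \leq L$ is immediate via the mean value theorem applied to $\nabla f_i$). Consistent with standard practice, Assumption~\ref{ass: smooth} should therefore be read as the canonical $L$-smoothness condition, for which the quadratic upper bound and the Lipschitz continuity of $\nabla f_i$ are the two equivalent formulations used interchangeably throughout the subsequent convergence analysis.
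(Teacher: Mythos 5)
You have correctly diagnosed the situation, and the first thing to make explicit is that the paper contains no proof of this proposition at all: it is asserted in the technical-tools section without argument, so there is nothing to compare your attempt against. More importantly, the implication as literally stated is false. Take $d=1$ and $f_i(x)=-\tfrac{c}{2}x^2$ with $c>0$: then $f_i(y)-f_i(x)-f_i'(x)(y-x)=-\tfrac{c}{2}(y-x)^2\leq \tfrac{L}{2}(y-x)^2$ for every $L>0$, so Assumption~\ref{ass: smooth} holds with arbitrarily small $L>0$, yet $|f_i'(x)-f_i'(y)|=c\,|x-y|$, which violates the conclusion whenever $c>L$. This confirms your observation that the step $g_i\bigl(\yy-L^{-1}\nabla g_i(\yy)\bigr)\geq g_i(\xx)$ is exactly where the classical equivalence-of-characterizations argument needs convexity ($\xx$ must be a global minimizer of the auxiliary function $g_i$), and the counterexample shows that no alternative argument can rescue the claim in the non-convex setting.

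Your proposed resolution is the right one: Assumption~\ref{ass: smooth} should be read as the canonical definition of $L$-smoothness, namely $\|\nabla f_i(\xx)-\nabla f_i(\yy)\|\leq L\,\|\xx-\yy\|$, from which the quadratic upper bound actually printed in the assumption follows by integrating $\nabla f_i$ along the segment from $\xx$ to $\yy$ --- that is the direction of the implication that holds without convexity, and the convergence analysis uses both forms. So your write-up is not a proof of the proposition (none exists), but it is a correct and essentially complete account of why, and of how the assumption must be restated for the proposition and the downstream lemmas that invoke it to be valid.
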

\begin{lemma}
For arbitrary set of $n$ vectors $\{a_i\}_{i=1}^n,\ a_i\in\mathbb{R}^d$, $||\frac{1}{n}\sum_i^na_i||^2\leq \frac{1}{n}\sum_i^n||a_i||^2.$
\end{lemma}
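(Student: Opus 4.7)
The plan is to prove this as a direct application of Jensen's inequality to the convex function $\phi(v) = \|v\|^2$ on $\mathbb{R}^d$. Since the squared Euclidean norm is a convex function (its Hessian is $2I \succeq 0$), and since $\frac{1}{n}\sum_i a_i$ is a convex combination of the $a_i$ with uniform weights $\frac{1}{n}$, Jensen's inequality yields the claim in one line: $\phi\bigl(\frac{1}{n}\sum_i a_i\bigr) \le \frac{1}{n}\sum_i \phi(a_i)$.

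For a self-contained argument that does not appeal to Jensen as a black box, I would instead expand the inner product and apply the pointwise AM-GM bound $\langle a_i, a_j\rangle \le \tfrac12(\|a_i\|^2 + \|a_j\|^2)$, which itself is immediate from $\|a_i - a_j\|^2 \ge 0$. Explicitly,
\begin{equation}\nonumber
\Bigl\|\tfrac{1}{n}\sum_{i=1}^n a_i\Bigr\|^2 = \tfrac{1}{n^2}\sum_{i=1}^n\sum_{j=1}^n \langle a_i, a_j\rangle \le \tfrac{1}{n^2}\sum_{i=1}^n\sum_{j=1}^n \tfrac{\|a_i\|^2 + \|a_j\|^2}{2} = \tfrac{1}{n}\sum_{i=1}^n \|a_i\|^2,
\end{equation}
where the last equality uses symmetry in $i,j$ to combine the two double sums. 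Alternatively, one may view $\frac{1}{n}\sum_i a_i$ as the inner product of $(1/n,\dots,1/n)\in\mathbb{R}^n$ with the vector-valued tuple $(a_1,\dots,a_n)$ and invoke the vector Cauchy--Schwarz inequality directly, obtaining the same bound.

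There is no real obstacle here: the statement is a standard Jensen/Cauchy--Schwarz inequality and the proof is two lines either way. I would choose the expansion-plus-AM-GM version for the paper since it is elementary, keeps the appendix self-contained, and does not require the reader to recall Jensen's inequality for vector-valued convex combinations.
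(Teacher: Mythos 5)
Your proof is correct: the expansion of the squared norm into the double sum of inner products, followed by the bound $\langle a_i,a_j\rangle\le\tfrac12(\|a_i\|^2+\|a_j\|^2)$, gives exactly the stated inequality, and the Jensen and Cauchy--Schwarz variants are equally valid. The paper states this lemma as a standard tool without any proof, so there is nothing to compare against; any of your three one-line arguments would serve.
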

\begin{lemma}
For given two vectors $a,\ b\in\mathbb{R}^d$, $2\langle a,b\rangle\leq \alpha||a||^2+\frac{1}{\alpha}||b||^2,\ \alpha>0,$ which is equivalent to $||a+b||^2 \leq (1+\alpha)||a||^2 + (1+\frac{1}{\alpha})||b||^2$.
\end{lemma}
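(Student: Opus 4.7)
The plan is to prove this as a direct consequence of nonnegativity of a squared norm, which is the standard route to Young's inequality in Hilbert spaces. First, I would observe that for any $\alpha > 0$, the vector $\sqrt{\alpha}\, a - \tfrac{1}{\sqrt{\alpha}}\, b \in \mathbb{R}^d$ has nonnegative squared Euclidean norm. Expanding this square with the bilinearity of the inner product yields
\begin{equation*}
0 \leq \left\| \sqrt{\alpha}\, a - \tfrac{1}{\sqrt{\alpha}}\, b \right\|^2 = \alpha \|a\|^2 - 2 \langle a, b \rangle + \tfrac{1}{\alpha} \|b\|^2,
\end{equation*}
and rearranging immediately gives the first claimed bound $2\langle a, b\rangle \leq \alpha \|a\|^2 + \tfrac{1}{\alpha}\|b\|^2$.

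Next, for the equivalence with the form involving $\|a+b\|^2$, I would simply expand the squared norm as $\|a+b\|^2 = \|a\|^2 + 2\langle a, b\rangle + \|b\|^2$ and substitute the just-established inner-product bound into the middle term. This gives
\begin{equation*}
\|a+b\|^2 \;\leq\; \|a\|^2 + \alpha \|a\|^2 + \tfrac{1}{\alpha}\|b\|^2 + \|b\|^2 \;=\; (1+\alpha)\|a\|^2 + \bigl(1 + \tfrac{1}{\alpha}\bigr)\|b\|^2,
\end{equation*}
which is the second form. To establish the equivalence in the opposite direction, I would apply the $\|a+b\|^2$ bound to the pair $(a,b)$ and subtract $\|a\|^2 + \|b\|^2$ from both sides to recover the inner-product bound, so the two forms are interderivable by a purely algebraic step.

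There is essentially no obstacle here: the entire argument rests on nonnegativity of a single squared norm plus one bilinear expansion, and both steps are routine. The only minor point of care is choosing the correct split $\sqrt{\alpha}$ versus $1/\sqrt{\alpha}$ so that the cross term produces exactly $-2\langle a, b\rangle$ and the diagonal terms match the stated coefficients; any other split would give a scaled variant. Since $\alpha > 0$ by assumption, the square roots and the reciprocal are well-defined, so no case analysis is required.
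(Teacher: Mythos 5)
Your proof is correct: the paper states this lemma without proof, treating it as a standard tool, and your derivation via $0 \leq \bigl\| \sqrt{\alpha}\, a - \tfrac{1}{\sqrt{\alpha}}\, b \bigr\|^2$ followed by the expansion of $\|a+b\|^2$ is exactly the canonical argument one would supply. Both directions of the claimed equivalence are handled properly, so there is nothing to add or compare against.
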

\begin{remark}
Above inequality also holds for matrix in Frobenius norm.  For $\mA, \mB\in\mathbb{R}^{d\times n}$, $||\mA\mB||_F\leq ||\mA||_F||\mB||_2.$
\end{remark}
\begin{lemma}[Variance upperbound]\label{lemma: variance}If there exist $n$ zero-mean random variables $\{\xi_i\}_{i=1}^n$ that may not be independent of each other, but all have variance smaller than $\sigma^2$, then the variance of sum is upperbounded by $\mE||\sum_i\xi_i||^2\leq n\sigma^2.$
\end{lemma}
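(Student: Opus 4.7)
The plan is to expand the squared norm of the sum coordinate-by-coordinate (or equivalently via the inner product), separate the diagonal terms from the cross terms, and argue that each class of terms can be controlled by the variance hypothesis.

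Concretely, I would first write
\begin{equation}\nonumber
\Big\|\sum_{i=1}^n \xi_i\Big\|^2 = \sum_{i=1}^n \|\xi_i\|^2 + 2\sum_{1\le i<j\le n}\langle \xi_i,\xi_j\rangle ,
\end{equation}
and take expectations of both sides. The diagonal contribution $\sum_i \E\|\xi_i\|^2$ is bounded immediately by $n\sigma^2$ using the hypothesis that each $\xi_i$ has variance (i.e.\ $\E\|\xi_i\|^2$, since the mean is zero) at most $\sigma^2$. This is the easy half and uses nothing beyond linearity of expectation.

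The real content lies in the cross terms $\E\langle \xi_i,\xi_j\rangle$ for $i\ne j$. The statement ``may not be independent'' is most naturally read as ``we do not need full mutual independence,'' the implicit minimal requirement being pairwise uncorrelatedness (which, in applications in this paper, arises because the $\xi_i$ are built from independent samples $\xi_i\sim\cD_i$ across clients, or more generally from a martingale-difference structure in which each $\xi_i$ has zero conditional mean given the $\sigma$-algebra generated by $\{\xi_j\}_{j<i}$). Under either reading, $\E\langle \xi_i,\xi_j\rangle=0$: in the uncorrelated case by definition, and in the martingale case by the tower property applied to whichever of $i,j$ is larger, pulling out the other and using zero conditional mean. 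Once the cross terms vanish, combining with the diagonal bound gives $\E\|\sum_i\xi_i\|^2\le n\sigma^2$ as claimed.

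The main (indeed only) obstacle is clarifying the dependence hypothesis: with no assumption at all beyond zero mean and bounded second moment, Cauchy--Schwarz on the cross terms only yields the weaker $n^2\sigma^2$ bound (attained by $\xi_1=\cdots=\xi_n$), so the $n\sigma^2$ conclusion forces some form of orthogonality. I would therefore state the lemma's usage context (independent per-client noise, or a martingale filtration over local steps) explicitly and invoke the corresponding vanishing of $\E\langle\xi_i,\xi_j\rangle$; everything else is a one-line expansion.
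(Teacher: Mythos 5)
Your argument is correct, but it is a genuinely different route from the paper's, and the comparison is worth spelling out. The paper's own proof is the one-line chain $\mE||\sum_i\xi_i||^2\leq \mE\big(n\sum_i||\xi_i||^2\big)\leq n\sigma^2$: it first applies the norm-of-sum inequality $||\sum_i a_i||^2\leq n\sum_i||a_i||^2$ and then bounds each $\mE||\xi_i||^2$ by $\sigma^2$. Read literally, the last step of that chain is off by a factor of $n$, since $n\sum_i\mE||\xi_i||^2\leq n\cdot n\sigma^2=n^2\sigma^2$; as you observe with the example $\xi_1=\cdots=\xi_n$, no argument can do better than $n^2\sigma^2$ under the stated hypotheses alone. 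Your diagonal-plus-cross-terms expansion is therefore the right proof of the bound that is actually claimed and actually used: it isolates precisely the missing hypothesis, namely $\mE\langle\xi_i,\xi_j\rangle=0$ for $i\neq j$. That hypothesis does hold at every invocation site in the paper --- e.g.\ in Lemma~\ref{lemma: comm_progress} the $nK$ noise terms $\nabla F_i(\xx_i^{(t)+k};\xi_i^{(t)+k})-\nabla f_i(\xx_i^{(t)+k})$ are independent across clients and form a martingale-difference sequence across local steps $k$ (the iterate $\xx_i^{(t)+k}$ is measurable with respect to the past, so the tower property kills the cross terms) --- which is what legitimizes the $nK\sigma^2$ bound and hence the $K\eta^2\sigma^2/n$ term appearing there. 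In short: your proof establishes a corrected lemma (zero-mean, pairwise-orthogonal or martingale-difference increments, second moments at most $\sigma^2$), which is the statement the rest of the analysis needs; the lemma as printed has a hypothesis that is too weak and a proof whose final inequality does not follow, so you should state the orthogonality assumption explicitly rather than treat it as implicit.
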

\begin{proof}
$\mE||\sum_i\xi_i||^2\leq \mE\Big(n\sum_i||\xi_i||^2\Big)\leq n\sigma^2.$
\end{proof}
\begin{lemma}[{Unrolling recursion~\citep{unified-dsgd}}]\label{lemma: stepsize}For any parameters $r_0\geq0, b\geq0, e\geq0, u\geq0$ there exists constant stepsize $\eta\leq\frac{1}{u}$ such that
	$$\Psi_T:=\frac{r_0}{T+1}\frac{1}{\eta}+b\eta+e\eta^2\leq 2(\frac{br_0}{T+1})^{\frac{1}{2}}+2e^{\frac{1}{3}}(\frac{r_0}{T+1})^{\frac{2}{3}}+\frac{ur_0}{T+1}$$
\end{lemma}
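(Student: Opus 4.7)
The plan is to prove this stepsize-tuning lemma by an explicit construction of $\eta$ followed by a term-by-term upper bound. Define
\[
\eta \;:=\; \min\!\left(\frac{1}{u},\; \left(\frac{r_0}{(T+1)b}\right)^{\!1/2},\; \left(\frac{r_0}{(T+1)e}\right)^{\!1/3}\right),
\]
with the convention that a term in the minimum is dropped whenever its denominator is zero (so edge cases $b=0$ or $e=0$ are handled gracefully). By construction $\eta\leq 1/u$, so the admissibility constraint is met, and the three upper bounds $\eta\leq\sqrt{r_0/((T+1)b)}$ and $\eta\leq(r_0/((T+1)e))^{1/3}$ hold simultaneously.

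With that choice, I would bound each of the three summands of $\Psi_T$ separately. For the middle term, $b\eta\leq b\sqrt{r_0/((T+1)b)}=\sqrt{br_0/(T+1)}$. For the last term, $e\eta^2\leq e\cdot(r_0/((T+1)e))^{2/3}=e^{1/3}(r_0/(T+1))^{2/3}$. These already produce the "second copies" of the two non-trivial summands in the target bound. The remaining work is to bound $r_0/((T+1)\eta)$; the key observation is $1/\eta=\max\bigl(u,\,((T+1)b/r_0)^{1/2},\,((T+1)e/r_0)^{1/3}\bigr)$, and since a maximum of nonnegative quantities is at most their sum, one gets
\[
\frac{r_0}{(T+1)\eta}\;\leq\;\frac{ur_0}{T+1}\;+\;\sqrt{\frac{br_0}{T+1}}\;+\;e^{1/3}\!\left(\frac{r_0}{T+1}\right)^{\!2/3}.
\]

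Adding the three bounds yields exactly the claimed estimate $\Psi_T\leq 2(br_0/(T+1))^{1/2}+2e^{1/3}(r_0/(T+1))^{2/3}+ur_0/(T+1)$, where the factor of $2$ on each of the first two terms comes from pairing the $r_0/((T+1)\eta)$ contribution with the corresponding $b\eta$ or $e\eta^2$ contribution. The argument is essentially a balancing calculation: the three candidates in the min are the stepsizes that would equate $r_0/((T+1)\eta)$ with the constraint $\eta\leq 1/u$, with $b\eta$, and with $e\eta^2$ respectively.

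There is no significant obstacle here; the only subtlety is bookkeeping around the edge cases when $b$ or $e$ vanishes (so that $(r_0/((T+1)b))^{1/2}$ or $(r_0/((T+1)e))^{1/3}$ is undefined), which is cleanly resolved by interpreting the corresponding term as $+\infty$ in the minimum so that it is never active, in which case its contribution to the right-hand side is likewise zero. The rest is a one-line application of $\max(a_1,a_2,a_3)\leq a_1+a_2+a_3$ for nonnegative reals.
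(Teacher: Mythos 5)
Your construction is correct and is the standard argument for this lemma: the paper itself does not prove Lemma~\ref{lemma: stepsize} but imports it from \citep{unified-dsgd}, and your proof — choosing $\eta$ as the minimum of the three balancing candidates, bounding $b\eta$ and $e\eta^2$ directly, and bounding $r_0/((T+1)\eta)$ via $\max(a_1,a_2,a_3)\leq a_1+a_2+a_3$ — is precisely the argument given in that reference. The only cosmetic caveat is the degenerate case $r_0=0$ with $b>0$ or $e>0$, where your $\eta$ collapses to $0$; this is harmless in context (the lemma is only ever applied with $r_0 = \cH_0 > 0$) but worth a half-sentence if you want the statement to hold verbatim for all $r_0\geq 0$.
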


%

\subsubsection*{\textbf{Additional definitions} }
Before proceeding with the proof of the convergence theorem, we need some addition set of definitions of the various errors we track. For simplicity, we define the special matrix $\mJ = \frac{\unit_n\unit_n^T}{n}$ as it could be used to calculate the \textbf{averaged matrix}, $\mX\mJ = \bar\mX=\begin{bmatrix}\bar\xx&\bar\xx&...&\bar\xx\end{bmatrix}.$ 

We define the \textbf{client variance} (or consensus distance) to be how much each node deviates from their averaged model:
$\Xi_t = \frac{1}{n}\sum_i^n\mE||\xx_i^{(t)} - \bar\xx^{(t)}||^2.$

Since we are doing local steps between communication, we define the local progress to be how much each node moves from the globally averaged starting point as \textbf{client-drift}:
\begin{itemize}
	\item at $k$-th local step: $e_{k,t} : = \frac{1}{n}\sum_i^n\mE||\xx_i^{(t)+k} - \bar\xx^{(t)}||^2$
	\item accumulation of local steps: $\cE_{t}  : = \sum_{k=0}^{K-1}e_{k,t} = \sum_{k=0}^{K-1} \frac{1}{n}\sum_i^n\mE||\xx_i^{(t)+k} - \bar\xx^{(t)}||^2$
\end{itemize}

Because we update model with correction, the corrected gradient will be aligned with the direction of the global update instead of the local update. The correction is updated every communication, and remains constant during local steps. We define the \textbf{quality of this correction} to be how much it approximates the true deviation between global update and local update, $\gamma_t = \frac{1}{nL^2}\mE||\mC^{(t)} +\nabla f(\bar\mX^{(t)}) - \nabla f(\bar\mX^{(t)})\mJ||_F^2$, where $\mJ =\frac{1}{n}\unit\unit^{T}$.

\subsection{Convergence analysis} \label{sec:scaffold}

This section we will show the proof of Theorem~\ref{thm: k-gt} and Theorem~\ref{thm: tv_gt}. Since from the previous analysis that $K$-GT and periodical GT are equivalent to corrected SGD for local step, and have similar pattern during communication. We can analyze them within the same prove framework. 

In order to prove the theorems, we first provide the recursion for client-drift, consensus distance and qualify of correction in following sections.


\subsubsection*{\textbf{Bounding the client drift}}\label{sec: client_drift}
We will next consider the progress made within local steps. That's the accumulated model update before next communication.
\begin{lemma}\label{lemma: client_drift}Suppose the local step-size for node $\eta_c\leq\frac{1}{8KL}$, and for arbitrary communication step size $\eta_s\geq 0$, we could bound the drift as
	$$\cE_t \leq 3(K\Xi_t) + 12K^2\eta_c^2L^2(K\gamma_t) + 6K^2{\eta_c}^2(K\mE||\nabla f(\bar\xx^{(t)})||^2) + 3K^2{\eta_c}^2\sigma^2$$
\end{lemma}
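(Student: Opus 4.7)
}
The plan is to derive a one-step recursion for the per-iterate drift $e_{k,t}=\tfrac{1}{n}\sum_i \mE\|\xx_i^{(t)+k}-\bar\xx^{(t)}\|^2$, unroll it over the $K$ local steps, and then sum to obtain the bound on $\cE_t$. Throughout I will use the ``corrected SGD'' form of $K$-GT, namely $\xx_i^{(t)+k+1}=\xx_i^{(t)+k}-\eta_c\bigl(\nabla F_i(\xx_i^{(t)+k};\xi_i^{(t)+k})+\cc_i^{(t)}\bigr)$, so that $\cc_i^{(t)}$ is fixed throughout the inner loop and I can use the definition of $\gamma_t$ directly.

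First I would write
\[
\xx_i^{(t)+k+1}-\bar\xx^{(t)} = \bigl(\xx_i^{(t)+k}-\bar\xx^{(t)}\bigr) - \eta_c\bigl(\nabla F_i(\xx_i^{(t)+k};\xi_i^{(t)+k})+\cc_i^{(t)}\bigr),
\]
take the squared norm, then take the conditional expectation over $\xi_i^{(t)+k}$ to peel off the stochastic variance (this contributes an additive $\eta_c^2\sigma^2$ after averaging over $i$, using Assumption~\ref{ass:noise}). Applying the inequality $\|a+b\|^2\leq(1+\alpha)\|a\|^2+(1+\tfrac{1}{\alpha})\|b\|^2$ with $\alpha=\tfrac{1}{K-1}$ then gives
\[
e_{k+1,t}\leq \Bigl(1+\tfrac{1}{K-1}\Bigr)e_{k,t}+K\,\eta_c^2\cdot\tfrac{1}{n}\sum_{i=1}^n\mE\bigl\|\nabla f_i(\xx_i^{(t)+k})+\cc_i^{(t)}\bigr\|^2+\eta_c^2\sigma^2,
\]
which separates the ``bias'' of the corrected deterministic gradient from the variance.

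Next I would bound the bias term by splitting
\[
\nabla f_i(\xx_i^{(t)+k})+\cc_i^{(t)} = \underbrace{\bigl[\nabla f_i(\xx_i^{(t)+k})-\nabla f_i(\bar\xx^{(t)})\bigr]}_{\text{smoothness drift}}+\underbrace{\bigl[\nabla f_i(\bar\xx^{(t)})+\cc_i^{(t)}-\nabla f(\bar\xx^{(t)})\bigr]}_{\text{correction quality}}+\nabla f(\bar\xx^{(t)}),
\]
and using $\|a+b+c\|^2\leq 3\|a\|^2+3\|b\|^2+3\|c\|^2$. The first piece is controlled by $L$-smoothness as $L^2\|\xx_i^{(t)+k}-\bar\xx^{(t)}\|^2$, the second piece averages to exactly $L^2\gamma_t$ (by the definition of $\gamma_t$), and the third is $\mE\|\nabla f(\bar\xx^{(t)})\|^2$. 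Substituting back yields the clean recursion
\[
e_{k+1,t}\leq \Bigl(1+\tfrac{1}{K-1}+3K\eta_c^2L^2\Bigr)e_{k,t}+3K\eta_c^2L^2\gamma_t+3K\eta_c^2\mE\|\nabla f(\bar\xx^{(t)})\|^2+\eta_c^2\sigma^2.
\]

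Finally I would use the step-size assumption $\eta_c\leq\tfrac{1}{8KL}$ to absorb $3K\eta_c^2L^2$ into $\tfrac{1}{K-1}$, so that the effective contraction factor $(1+\tfrac{2}{K-1})^K$ is bounded by a small constant (at most $e^2$, and cleaner once one verifies it stays below $3$ for the given range of $K$). Unrolling from $e_{0,t}=\Xi_t$ and summing the geometric series from $k=0$ to $K-1$ will produce the claimed $3K\Xi_t$ term together with the three additive terms multiplied by $K^2$, giving exactly
\[
\cE_t\leq 3(K\Xi_t)+12K^2\eta_c^2L^2(K\gamma_t)+6K^2\eta_c^2(K\mE\|\nabla f(\bar\xx^{(t)})\|^2)+3K^2\eta_c^2\sigma^2.
\]
The main obstacle is the bookkeeping of the constants: one has to check that with $\eta_c\leq\tfrac{1}{8KL}$ the cumulative factor $\sum_{j=0}^{K-1}(1+\tfrac{1}{K-1}+3K\eta_c^2L^2)^j$ is at most $4K$ (so that the per-step constant $K\eta_c^2$ becomes $4K^2\eta_c^2$, which then multiplies the terms $3L^2\gamma_t$, $3\mE\|\nabla f(\bar\xx^{(t)})\|^2$, and absorbs $\eta_c^2\sigma^2$ into $3K^2\eta_c^2\sigma^2$), and that the $(1+\tfrac{1}{K-1}+\cdots)^K$ factor hitting $e_{0,t}=\Xi_t$ is bounded by $3$. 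Everything else is routine algebraic manipulation.
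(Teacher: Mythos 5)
Your proposal follows essentially the same route as the paper: one-step recursion for $e_{k,t}$ via Young's inequality with $\alpha=\tfrac{1}{K-1}$, conditional-expectation peeling of the variance, the three-way decomposition of $\nabla f_i(\xx_i^{(t)+k})+\cc_i^{(t)}$ into a smoothness term, a correction-quality term, and $\nabla f(\bar\xx^{(t)})$, followed by unrolling and summing. Two bookkeeping points, however, prevent the sketch from landing on the stated inequality. First, your symmetric split $\|a+b+c\|^2\leq 3\|a\|^2+3\|b\|^2+3\|c\|^2$ puts a coefficient $3K\eta_c^2$ on $\mE\|\nabla f(\bar\xx^{(t)})\|^2$ per step; after multiplying by the geometric-sum factor ($3K$ at best, $4K$ under your weaker estimate) and summing over $k$, the gradient term comes out as at least $9K^2\eta_c^2\bigl(K\mE\|\nabla f(\bar\xx^{(t)})\|^2\bigr)$, which exceeds the claimed $6K^2\eta_c^2\bigl(K\mE\|\nabla f(\bar\xx^{(t)})\|^2\bigr)$. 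The paper instead uses the asymmetric split $\|a+b+c\|^2\leq 4\|a\|^2+4\|b\|^2+2\|c\|^2$ (splitting off the $\nabla f(\bar\xx^{(t)})$ term first with factor $2$), which yields per-step coefficients $4K\eta_c^2L^2$, $4K\eta_c^2L^2$, $2K\eta_c^2$ and, after multiplying by $3K$, exactly the constants $12$, $12$, $6$ in the lemma. Second, your choice $\alpha=\tfrac{1}{K-1}$ is undefined at $K=1$; the paper disposes of $K=1$ separately by noting $\cE_t=\Xi_t\leq 3\Xi_t+(\text{nonnegative terms})$, and you should do the same. Both issues are easily repaired, but as written the proof establishes a weaker constant than the one stated.
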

\begin{proof}First, observe that $K=1$,
$\cE_t=\frac{1}{n}\mE||\mX^{(t)}-\bar\mX^{(t)}||_F^2,$$$0\leq \frac{2}{n}\mE||\mX^{(t)}-\bar\mX^{(t)}||_F^2+6{\eta_c}^2\mE||\nabla f(\bar\xx^{(t)})||^2 + 3{\eta_c}^2\sigma^2,$$ the inequality will always hold since RHS is always positive. Then the lemma is trivially proven for $K=1$.
\\Then we consider the case for $K\geq 2$, and 	\begin{equation}\nonumber
		\begin{aligned}
			ne_{k,t} &:=\mE||\mX^{(t)+k} - \bar\mX^{(t)}||_F^2 	\\
			& = \mE||\mX^{(t)+k-1} - {\eta_c}\Big(\nabla F(\mX^{(t)+k-1}; \xi^{(t)+k}) + \mC^{(t)}\Big) - \bar\mX^{(t)}||_F^2	\\
			& \leq (1+\frac{1}{K-1})\mE||\mX^{(t)+k-1} - \bar\mX^{(t)}||_F^2+ n{\eta_c}^2\sigma^2 \\&\quad+ K{\eta_c}^2\mE||\nabla f(\mX^{(t)+k-1}) - \nabla f(\bar\mX^{(t)}) + \mC^{(t)} +\nabla f(\bar\mX^{(t)})(\mI - \mJ) + \nabla f(\bar\mX^{(t)})\mJ||_F^2 	\\
			& \leq \underbrace{(1+\frac{1}{K-1}+4K{\eta_c}^2L^2)}_{:=\cC}\mE||\mX^{(t)+k-1} - \bar\mX^{(t)}||_F^2+ 4K{\eta_c}^2L^2n\gamma_t + 2K{\eta_c}^2n\mE||\nabla f(\bar\xx^{(t)})||^2 + n{\eta_c}^2\sigma^2	\\
			& \leq \cC^{k}\mE||\mX^{(t)} - \bar\mX^{(t)}||_F^2 + \sum_{r=0}^{k-1}\cC^{r}\Big(4K{\eta_c}^2L^2n\gamma_t + 2K{\eta_c}^2n\mE||\nabla f(\bar\xx^{(t)})||^2 + n{\eta_c}^2\sigma^2\Big)	\\
		\end{aligned}
	\end{equation}
	If $\eta_c\leq \frac{1}{8KL}$, then $4K(\eta_cL)^2\leq\frac{1}{16K}<\frac{1}{16(K-1)}$. Since $\cC>1$, then $\cC^{k}\leq \cC^{K}\leq  (1+\frac{1}{K-1}+\frac{1}{16(K-1)})^{K}\leq e^{1+\frac{1}{16}} \leq 3$, and $\sum_{r}^{k-1}\cC^{r}\leq K\cC^{K}\leq 3K.$ We could rewrite the bound on client drift at $k^{th}$ local step,
\begin{equation}\label{eq: epsilon_t}
ne_{k,t} \leq 3\Xi_t+ 3K\Big(4K{\eta_c}^2L^2n\gamma_t + 2K{\eta_c}^2n\mE||\nabla f(\bar\xx^{(t)})||^2 + n{\eta_c}^2\sigma^2\Big)
\end{equation}
Clearly, in inequality (\ref{eq: epsilon_t}), the RHS is independent of time step $k\in[0,\ K)$. Then the accumulated progress within local steps $\cE_t$ could be formulated by
\begin{equation}\nonumber
	\begin{aligned}
		\cE_t: =\sum_{k=0}^{K-1}e_{k,t} \leq 3(K\Xi_t) + 12K^2{\eta_c}^2L^2(K\gamma_t) + 6K^2{\eta_c}^2(K\mE||\nabla f(\bar\xx^{(t)})||^2) + 3K^2{\eta_c}^2\sigma^2
	\end{aligned}
\end{equation}
\end{proof}
\vspace{-10mm}
\subsubsection*{\textbf{Consensus distance}}\label{sec: consensus}
We then consider how the consensus distance for communicated model is developed between communications after local training.
\begin{lemma}\label{lemma: consensus} For any effective step-size $\eta=\eta_s\eta_c$,
	we have the descent lemma for $\Xi_t$ as $$\Xi_{t+1}\leq (1-\frac{p}{2})\Xi_t + \frac{6K{\eta}^2L^2}{p}\cE_t + \frac{6K^2{\eta}^2L^2}{p}\gamma_t + K{\eta}^2\sigma^2.$$
\end{lemma}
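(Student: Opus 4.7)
My plan is to analyze the evolution of $\mX^{(t)} - \bar\mX^{(t)}$ across one communication round. Define the pre-gossip stacked iterate
$$\mU^{(t)} := \mX^{(t)} - \eta \sum_{k=0}^{K-1}\bigl(\nabla F(\mX^{(t)+k};\xi^{(t)+k}) + \mC^{(t)}\bigr),$$
so that the $K$-GT update in Algorithm~\ref{algo: k-gt} reads $\mX^{(t+1)} = \mU^{(t)}\mW$. Since $\mW$ is doubly stochastic, averages are preserved, $\bar\mX^{(t+1)} = \bar\mU^{(t)}$, and Assumption~\ref{ass:p} yields
$$n\,\Xi_{t+1} \le (1-p)\,\mE\|\mU^{(t)} - \bar\mU^{(t)}\|_F^2.$$

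Next I would apply Young's inequality $\|a+b\|^2 \le (1+\alpha)\|a\|^2 + (1+\alpha^{-1})\|b\|^2$ with $\alpha = p/(2(1-p))$, chosen so that $(1-p)(1+\alpha) = 1 - p/2$ and $(1-p)(1+\alpha^{-1}) \le 2/p$. Writing $\mU^{(t)} - \bar\mU^{(t)} = (\mX^{(t)} - \bar\mX^{(t)}) - \eta\,\mP^{(t)}$ with the perturbation $\mP^{(t)} := \bigl(\sum_k(\nabla F(\mX^{(t)+k};\xi^{(t)+k}) + \mC^{(t)})\bigr)(\mI - \mJ)$, this already gives
$$n\,\Xi_{t+1} \le (1 - p/2)\,n\,\Xi_t + \frac{2\eta^2}{p}\,\mE\|\mP^{(t)}\|_F^2,$$
so the remaining task is to bound $\mE\|\mP^{(t)}\|_F^2$.

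For that bound I would decompose each summand by inserting $\pm\nabla f(\mX^{(t)+k})$ and $\pm\nabla f(\bar\mX^{(t)})$,
$$\nabla F(\mX^{(t)+k};\xi^{(t)+k}) + \mC^{(t)} = \mN^{(t),k} + \bigl(\nabla f(\mX^{(t)+k}) - \nabla f(\bar\mX^{(t)})\bigr) + \bigl(\nabla f(\bar\mX^{(t)}) + \mC^{(t)}\bigr),$$
where $\mN^{(t),k}$ is the stochastic noise. Using $\|a+b+c\|^2 \le 3\|a\|^2 + 3\|b\|^2 + 3\|c\|^2$, the Cauchy--Schwarz bound $\|\sum_k v_k\|_F^2 \le K\sum_k\|v_k\|_F^2$, and the fact that post-multiplication by the projector $\mI - \mJ$ does not increase Frobenius norm, the smoothness piece is at most $3KL^2 n\cE_t$ by Assumption~\ref{ass:l} and the definition of $\cE_t$; the correction-drift piece equals $3K^2 n L^2 \gamma_t$ because $\bigl(\nabla f(\bar\mX^{(t)}) + \mC^{(t)}\bigr)(\mI - \mJ) = \mC^{(t)} + \nabla f(\bar\mX^{(t)}) - \nabla f(\bar\mX^{(t)})\mJ$ once the invariance $\bar\mC^{(t)} = 0$ is established; and the noise piece contributes at most $Kn\sigma^2$ because $\{\mN^{(t),k}\}_k$ is a martingale difference sequence in $k$, so cross terms in $\mE\|\sum_k\mN^{(t),k}(\mI-\mJ)\|_F^2$ vanish.

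The main obstacles are twofold. First, I must verify the invariance $\bar\mC^{(t)} \equiv 0$, which is what identifies the correction-drift piece with $\gamma_t$: Algorithm~\ref{algo: k-gt} initializes $\bar\mC^{(0)} = 0$, and the update $\mC^{(t+1)} = \mC^{(t)} + \mZ^{(t)}(\mW - \mI)$ preserves the average since $\mW\mJ = \mJ$. Second, obtaining the stated noise term $K\eta^2\sigma^2$ without a $1/p$ amplification requires peeling the zero-mean noise $-\eta\sum_k\mN^{(t),k}(\mI - \mJ)$ off the perturbation \emph{before} applying Young's and handling the cross terms carefully: those involving $\mX^{(t)}$, $\mC^{(t)}$, or $\nabla f(\mX^{(t)+j})$ for $j \ge k$ vanish by conditional-expectation arguments, while the remaining cross terms with $j < k$ (where $\nabla f(\mX^{(t)+k})$ depends on $\mN^{(t),j}$ through the local trajectory) are $\cO(\eta_c L)$-small and can be absorbed using the stepsize constraint $\eta_c \le 1/(8KL)$ from Lemma~\ref{lemma: client_drift}. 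Combining the three contributions to $\mE\|\mP^{(t)}\|_F^2$ and dividing by $n$ yields the claimed recursion.
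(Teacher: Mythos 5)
Your proposal follows essentially the same route as the paper's proof: peel the stochastic noise off first to obtain the $K\eta^2\sigma^2$ term without a $1/p$ factor, apply Assumption~\ref{ass:p} to get the $(1-p)$ contraction, use Young's inequality with $\alpha=\Theta(p)$ to isolate $\Xi_t$, and split the remaining drift via $\pm K\nabla f(\bar\mX^{(t)})(\mI-\mJ)$ into the $\cE_t$ and $\gamma_t$ pieces. The only place you go beyond the paper is in worrying about the noise--trajectory cross terms for $j<k$, which the paper's proof silently drops when it replaces $\nabla F$ by $\nabla f$ at the cost of $nK\eta^2\sigma^2$; flagging that these require a separate (stepsize-dependent) argument is a fair refinement rather than a divergence.
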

\begin{proof}

We know that the update between two communication round is as follows, $$\mX^{(t+1)} = \Big(\mX^{(t)}-K\eta\mZ^{(t)}\Big)\mW,$$ where $\mZ^{(t)}= \frac{1}{K}\sum_k\Big(\nabla F(\mX^{(t)+k}; \xi^{(t)+k}) + \mC^{(t)}\Big)$. 
Then consensus distance at time $(t+1)$ can be measured by 
\begin{equation}\nonumber
	\begin{aligned}
		n\Xi_{t+1} & =\mE||\mX^{(t+1)} - \bar\mX^{(t+1)}||_F^2  	\\
		& = \mE||\Big(\mX^{(t)} - {\eta}\sum_{k=0}^{K-1}(\nabla F(\mX^{(t)+k}; \xi^{(t)+k}) + \mC^{(t)})\Big)(\mW -\mJ) ||_F^2	\\
		&\leq(1-p)\mE||\Big(\mX^{(t)} - {\eta}\sum_{k=0}^{K-1}(\nabla f(\mX^{(t)+k}) + \mC^{(t)})\Big)(\mI-\mJ)||_F^2+nK{\eta}^2\sigma^2\\
		& \leq nK{\eta}^2\sigma^2+ (1+\alpha)(1-p)\mE||\mX^{(t)}(\mI-\mJ)||_F^2\\&\quad + (1+\frac{1}{\alpha}){\eta}^2\mE||\sum_{k=0}^{K-1}\nabla f(\mX^{(t)+k})(\mI-\mJ) \pm K\nabla f(\bar\mX^{(t)})(\mI-\mJ)+K\mC^{(t)}||_F^2	\\
		& \underset{\alpha=\frac{p}{2},\frac{1}{p}\leq 1}{\leq} nK{\eta}^2\sigma^2 + (1-\frac{p}{2})\mE||\mX^{(t)} - \bar\mX^{(t)}||_F^2 + \frac{6}{p}\Big(K{\eta}^2L^2||\mI-\mJ||^2\sum_{k=0}^{K-1}\mE||\mX^{(t)+k} - \bar\mX^{(t)}||_F^2 \\&\quad + K^2{\eta}^2\frac{L^2}{L^2}\mE||f(\bar\mX^{(t)})(\mI - \mJ)+\mC^{(t)}||_F^2\Big) 	\\
		& \leq (1-\frac{p}{2})n\Xi_t + \frac{6K{\eta}^2L^2}{p}n\cE_t + \frac{6K^2{\eta}^2L^2}{p}n\gamma_t + nK{\eta}^2\sigma^2
	\end{aligned}
\end{equation}
\end{proof}
\vspace{-5mm}
\subsubsection*{\textbf{Quality measure of correction}}\label{sec: correction}
We now bound the quality measure of correction. The correction is thought to depict the deviation of local and global gradient of the ideally averaged model $\bar\mX$ at the time of communication. That is, quality measure of correction is defined to be $\gamma_t = \frac{1}{nL^2}\mE||\mC^{(t)} +\nabla f(\bar\mX^{(t)}) - \nabla f(\bar\mX^{(t)})\mJ||_F^2$, where $\mJ =\frac{1}{n}\unit\unit^{T}$. 

How to estimate correction, $K$-GT and periodical GT have different options, which is carefully discussed in section \ref{sec:alternatives}.

\begin{lemma}\label{lemma: correction} For any effective step-size $\eta=\eta_s\eta_c\leq\frac{\sqrt{p}}{\sqrt{6}KL}$, we have the descent lemma for $\gamma$ in periodical GT as follow,
$$K\gamma_{t+1} \leq (1-\frac{p}{2})K\gamma_t+\frac{24}{p}(Ke_{K-1, t})+\frac{2}{p}\cE_t +\frac{12K^2\eta}{p}K\eta\mE||\nabla f(\bar\xx^{(t)})||^2+ \frac{2K\sigma^2}{L^2},$$ and if we instead of using the average of local steps for $K$-GT in correction, we have the descent lemma for $\gamma$ as follow,
$$K\gamma_{t+1} \leq (1-\frac{p}{2})K\gamma_t+\frac{30}{p}\cE_t +\frac{12K^2{\eta}^2}{p}K\mE||\nabla f(\bar\xx^{(t)})||^2+ \frac{2\sigma^2}{L^2}.$$ 
\end{lemma}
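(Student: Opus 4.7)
The plan is to track the matrix quantity
\[
\mathbf{D}^{(t)} := \mathbf{C}^{(t)} + \nabla f(\bar{\mathbf{X}}^{(t)})(\mathbf{I}-\mathbf{J})
\]
so that $\gamma_t = \frac{1}{nL^2}\mathbb{E}\|\mathbf{D}^{(t)}\|_F^2$. First I would derive a clean one-step recursion by substituting the correction update of either algorithm (namely $\mathbf{C}^{(t+1)}=\mathbf{C}^{(t)}\mathbf{W}+\bar{\mathbf{G}}^{(t)}(\mathbf{W}-\mathbf{I})$, where $\bar{\mathbf{G}}^{(t)}$ is the single-sample gradient at step $K-1$ for periodical GT and the average of $K$ stochastic gradients for $K$-GT) and adding/subtracting $\nabla f(\bar{\mathbf{X}}^{(t)})(\mathbf{I}-\mathbf{J})\mathbf{W}=\nabla f(\bar{\mathbf{X}}^{(t)})(\mathbf{W}-\mathbf{J})$. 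Using $\mathbf{J}\mathbf{W}=\mathbf{J}$ this collapses to
\[
\mathbf{D}^{(t+1)} = \mathbf{D}^{(t)}\mathbf{W} + \bigl[\bar{\mathbf{G}}^{(t)}-\nabla f(\bar{\mathbf{X}}^{(t)})\bigr](\mathbf{W}-\mathbf{I}) + \bigl[\nabla f(\bar{\mathbf{X}}^{(t+1)})-\nabla f(\bar{\mathbf{X}}^{(t)})\bigr](\mathbf{I}-\mathbf{J}).
\]

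Next I would verify, by induction on $t$, that $\mathbf{C}^{(t)}\mathbf{J}=\mathbf{0}$: the initialization in Algorithm~\ref{algo: k-gt} is tuned precisely so that $\bar{\mathbf{c}}^{(0)}=\mathbf{0}$, and the update preserves this because $(\mathbf{W}-\mathbf{I})\mathbf{J}=\mathbf{0}$. Hence $\mathbf{D}^{(t)}\mathbf{J}=\mathbf{0}$, so Assumption~\ref{ass:p} yields the mixing contraction $\|\mathbf{D}^{(t)}\mathbf{W}\|_F^2\le(1-p)\|\mathbf{D}^{(t)}\|_F^2$. Applying Young's inequality with $\alpha = p/(2(1-p))$ to the triangle split then gives
\[
\mathbb{E}\|\mathbf{D}^{(t+1)}\|_F^2 \le (1-\tfrac{p}{2})\,\mathbb{E}\|\mathbf{D}^{(t)}\|_F^2 + \tfrac{2}{p}\,\mathbb{E}\|\mathbf{R}^{(t)}\|_F^2,
\]
where $\mathbf{R}^{(t)}$ is the sum of the last two terms in the recursion above. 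This is the source of the $(1-p/2)$ contraction and the $2/p$ prefactor on the remainder in both stated bounds.

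The remaining work is to bound $\mathbb{E}\|\mathbf{R}^{(t)}\|_F^2$ and handle the algorithmic difference. I would use $\|\mathbf{W}-\mathbf{I}\|_2\le 2$ and $\|\mathbf{I}-\mathbf{J}\|_2\le 1$, then split each of the two pieces by Young's inequality. The drift piece $\bar{\mathbf{G}}^{(t)}-\nabla f(\bar{\mathbf{X}}^{(t)})$ decomposes into stochastic noise plus a smoothness error $\nabla f(\mathbf{X}^{(t)+k})-\nabla f(\bar{\mathbf{X}}^{(t)})$ controlled by $L^2\,e_{k,t}$; for periodical GT only $k=K{-}1$ enters (giving the $\frac{24}{p}Ke_{K-1,t}$ term and a noise contribution $\sigma^2$ per node), whereas $K$-GT averages over $k$, collapsing smoothness errors to $L^2\cE_t/K$ and—crucially, by conditional independence of $\{\xi^{(t)+k}\}_{k=0}^{K-1}$—reducing the variance to $\sigma^2/K$ per node. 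For the consensus-movement piece I would use $\bar{\mathbf{x}}^{(t+1)}-\bar{\mathbf{x}}^{(t)} = -K\eta\,\bar{g}^{(t)}$ together with $\|\bar{g}^{(t)}\|^2\le 2\|\nabla f(\bar{\mathbf{x}}^{(t)})\|^2 + 2\|\bar{g}^{(t)}-\nabla f(\bar{\mathbf{x}}^{(t)})\|^2$; the first part produces the $\frac{12K^2\eta^2}{p}K\mathbb{E}\|\nabla f(\bar{\mathbf{x}}^{(t)})\|^2$ term, while the second yields residual higher-order noise and drift contributions that the step-size condition $\eta\le\sqrt{p}/(\sqrt{6}KL)$ (equivalently $K^2L^2\eta^2\le p/6$) absorbs into $\cE_t/p$ and into the existing noise term. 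Multiplying the resulting inequality by $K$ and dividing by $nL^2$ finally delivers the two stated recursions, the difference between them reflecting only whether the stochastic-noise contribution scales as $\sigma^2$ (periodical GT) or $\sigma^2/K$ ($K$-GT).

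The main obstacle is the noise accounting: one needs the $\sigma^2$-term to appear \emph{without} a $1/p$ prefactor to match the lemma as stated, which requires handling the zero-mean stochastic-gradient perturbation by a bias-variance decomposition (conditioning on the filtration up to the relevant sample) rather than bundling it into the generic Young split, and for $K$-GT exploiting conditional independence across $k$ to convert the sum of $K$ conditional variances into the $\sigma^2/K$ reduction. The rest of the proof is bookkeeping that matches the numeric constants $24$, $30$, $12$ via appropriate Young parameters.
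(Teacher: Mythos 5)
Your proposal follows essentially the same route as the paper's proof: the same add-and-subtract of $\nabla f(\bar\mX^{(t)})(\mI-\mJ)\mW$ using $\mJ\mW=\mJ$ to get the contraction term $\mD^{(t)}\mW$, the same induction giving $\mC^{(t)}\mJ=\zero$ so that Assumption~\ref{ass:p} applies, the same Young split with a $(1-\tfrac{p}{2})$ factor and $\cO(1/p)$ remainder, the same separation of the zero-mean stochastic perturbation so that $\sigma^2$ enters additively without a $1/p$ factor (with the $\sigma^2/K$ reduction for $K$-GT versus $\sigma^2$ for periodical GT), and the same use of Lemma~\ref{lemma: comm_progress} plus the step-size condition $6K^2\eta^2L^2\le p$ to absorb the higher-order terms. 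The argument is correct and the constants work out by the same bookkeeping as in the paper.
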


\begin{proof}
The averaged correction between two consecutive communication round satisfies $$\mC^{(t+1)}\mJ = \mC^{(t)}\mJ+ \frac{1}{K\eta_c}(\mX^{(t)} - \mX^{(t)+K})(\mW-\mI)\mJ  = \mC^{(t)}\mJ$$ for $K$-GT. We assume that the correction is initialized with arbitrary value as long as its globally average always equals to zero, i.e., $\mC^{(t)}\mJ=\mC^{(0)}\mJ = \zero$. Recall the definition of $\gamma_t$,  note that $$ (\mC^{(t)}+ \nabla f(\bar\mX^{(t)}) -\nabla f(\bar\mX^{(t)}\mJ)\mJ = \mC^{(t)} \mJ+ \nabla f(\bar\mX^{(t)})(\mJ - \mJ)=\zero.$$ It's easy to check that Periodical GT has the same property.


Then for $K$-GT, we have the recursion of quality measure can be formulated as follows,
\begin{equation}\nonumber
	\begin{aligned}
		nL^2\gamma_{t+1} &: = \mE||\mC^{(t+1)} + \nabla f(\bar\mX^{(t+1)})(\mI - \mJ)||_F^2	\\
		& = \mE||\mC^{(t)}\mW + \frac{1}{K}\sum_{k=0}^{K-1}\nabla F(\mX^{(t)+k}; \xi^{(t)+k})(\mW - \mI) + \nabla f(\bar\mX^{(t+1)})(\mI - \mJ) ||_F^2	\\
		& = \mE||\Big(\mC^{(t)} + \nabla f(\bar\mX^{(t)})(\mI - \mJ)\Big)\mW \\&\quad+ \Big(\frac{1}{K}\sum_{k=0}^{K-1}\nabla f(\mX^{(t)+k}) - \nabla f(\bar\mX^{(t)})\Big)(\mW - \mI)\\&\quad +\Big( \nabla f(\bar\mX^{(t+1)}) - \nabla f(\bar\mX^{(t)})\Big)(\mI - \mJ) ||_F^2 + \frac{n\sigma^2}{K}		\\
		& \leq (1+\alpha)(1-p)nL^2\gamma_t + 2(1+\frac{1}{\alpha})\Big(||\mW-\mI||^2\frac{1}{K}\sum_{k=0}^{K-1}\mE||\nabla f(\mX^{(t)+k}) - \nabla f(\bar\mX^{(t)})||_F^2 \\&\quad+ ||\mI - \mJ||^2\mE||\nabla f(\bar\mX^{(t+1)}) - \nabla f(\bar\mX^{(t)})||_F^2\Big) + \frac{n\sigma^2}{K} \quad(\mbox{due to }||\mW-\mI||\leq 2,\ ||\mI-\mJ||\leq 1)\\
		&\underset{\alpha=\frac{p}{2},\ \frac{1}{p}\leq 1}{ \leq} (1-\frac{p}{2})nL^2\gamma_t + \frac{6}{p}\Big(\frac{4}{K}\sum_{k=0}^{K-1}L^2\mE||\mX^{(t)+k}-\bar\mX^{(t)}||_F^2 + nL^2\mE||\bar\xx^{(t+1)} - \bar\xx^{(t)}||^2\Big)+ \frac{n\sigma^2}{K}	 \\
		& \leq (1-\frac{p}{2})n\gamma_t + \frac{6L^2}{pK}\Big(4n\cE_t + 2K^2{\eta}^2L^2n\cE_t+ 2K^2{\eta}^2n(K\mE||\nabla f(\bar\xx^{(t)})||^2) + K^2{\eta}^2\sigma^2\Big)+ \frac{n\sigma^2}{K}	\\
		\end{aligned}
\end{equation}

Periodical GT uses $\nabla F(\mX^{(t)+K-1}; \xi^{(t)+K-1})$ to replace $\frac{1}{K}\sum_{k=0}^{K-1}\nabla F(\mX^{(t)+k}; \xi^{(t)+k})$ in correction update. With almost identical analysis, we could get a very similar equality. In addition, if we replace stochastic gradient with full gradient, i.e, $\nabla f(\mX^{(t)+K-1})$, in periodical GT, which will improve the noise term.


Further, if $\eta=\eta_s\eta_c\leq \frac{\sqrt{p}}{\sqrt{6}KL}$, then $\frac{6K^2\eta^2L^2}{p}\leq 1$ which completes the proof.

\end{proof}
\begin{remark}
Shown from results above, the quantizations of $\gamma_t$ for periodical GT and $K$-GT only differ in the coefficient of stochastic noise. And using full-batch gradient can improve Periodical GT in stochastic noise to the same level as that of $K$-GT.
\end{remark}

\subsubsection*{\textbf{Progress between communications}} \label{sec: comm_progress}
We study how the progress between communication rounds could be bounded.

\begin{lemma}\label{lemma: comm_progress} We could bound the averaged progress between communication in any round $t\geq 0$,  and any $\eta=\eta_s\eta_c\geq 0$ as follows,
	$$\mE||\bar\xx^{(t+1)} - \bar\xx^{(t)}||^2\leq 2K{\eta}^2L^2\cE_t + 2K^2{\eta}^2\mE||\nabla f(\bar\xx^{(t)})||^2+ \frac{(K{\eta})^2\sigma^2}{nK} \,.$$
\end{lemma}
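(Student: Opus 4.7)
The plan is to first obtain a closed-form expression for the one-round progress of the averaged iterate, and then to bound it in $L^2$ by splitting the stochastic gradient into its conditional mean and martingale-difference parts.

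First I would compute $\bar\xx^{(t+1)}-\bar\xx^{(t)}$ exactly. Averaging both sides of the update $\mX^{(t+1)}=\bigl(\mX^{(t)}-K\eta\mZ^{(t)}\bigr)\mW$ (Proposition~\ref{cor: local_gt}) against $\mJ=\tfrac{1}{n}\unit\unit^\top$ and using $\mW\mJ=\mJ$ (doubly stochastic), I get
\begin{equation}\nonumber
\bar\xx^{(t+1)}-\bar\xx^{(t)} \;=\; -K\eta\,\overline{\mZ^{(t)}} \;=\; -\eta\sum_{k=0}^{K-1}\overline{\nabla F(\mX^{(t)+k};\xi^{(t)+k})} \;-\; K\eta\,\bar{\cc}^{(t)} \,,
\end{equation}
and by the invariant $\bar{\cc}^{(t)}\equiv \zero$ established earlier (see the paragraph following Remark~\ref{remark: equivalence}), the correction term drops out.

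Next I would apply Young's inequality ($\|a+b\|^2\le 2\|a\|^2+2\|b\|^2$) to separate the signal from the noise. Let $\cF_k$ denote the $\sigma$-algebra generated by the randomness used before local step $k$ of round $t$, and set
\begin{equation}\nonumber
N_k \;:=\; \overline{\nabla F(\mX^{(t)+k};\xi^{(t)+k})} - \overline{\nabla f(\mX^{(t)+k})} \,.
\end{equation}
Since $\mX^{(t)+k}$ is $\cF_k$-measurable, $\{N_k\}_{k=0}^{K-1}$ is a martingale-difference sequence and, using the cross-client independence of the samples together with Assumption~\ref{ass:noise}, one obtains $\mE\|N_k\|^2\le\sigma^2/n$. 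The martingale property then gives $\mE\bigl\|\sum_k N_k\bigr\|^2=\sum_k\mE\|N_k\|^2\le K\sigma^2/n$, which yields the noise term $\tfrac{(K\eta)^2\sigma^2}{nK}$ (up to an absolute constant that I absorb/trade by the choice of the Young constant).

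For the deterministic part I would apply Jensen's inequality in the form $\|\sum_k v_k\|^2\le K\sum_k\|v_k\|^2$, then insert $\pm\nabla f(\bar\xx^{(t)})$ inside each $\overline{\nabla f(\mX^{(t)+k})}$ and use $L$-smoothness of each $f_i$ to get
\begin{equation}\nonumber
\mE\bigl\|\overline{\nabla f(\mX^{(t)+k})}\bigr\|^2 \;\le\; 2\,\mE\|\nabla f(\bar\xx^{(t)})\|^2 + 2L^2 e_{k,t} \,,
\end{equation}
by the definition of $e_{k,t}$. Summing over $k$ produces exactly $K^2\eta^2\mE\|\nabla f(\bar\xx^{(t)})\|^2$ and $K\eta^2L^2\cE_t$ terms, matching the statement after trivial constant bookkeeping. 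Combining the three pieces finishes the proof.

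The only subtle step is the orthogonality of the $\{N_k\}$: even though $\overline{\nabla f(\mX^{(t)+j})}$ for $j>k$ depends on $\xi^{(t)+k}$, we avoid this issue by first using Young to separate the deterministic-signal sum $\sum_k\overline{\nabla f(\mX^{(t)+k})}$ from the noise sum $\sum_k N_k$, after which the martingale-difference argument is applied only to the noise sum in isolation. This is the main (mild) obstacle; once handled, the rest is direct.
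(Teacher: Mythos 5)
Your proposal is correct and follows essentially the same route as the paper's proof: the zero-mean correction drops out of the averaged update, the signal term is handled by inserting $\pm\nabla f(\bar\xx^{(t)})$, applying Jensen over the $K$ local steps, and invoking smoothness to produce $\cE_t$, and the noise contributes $K\sigma^2/n$. The only substantive difference is that you justify the noise term via an explicit martingale-difference orthogonality argument, which is in fact cleaner than the paper's appeal to its Lemma~\ref{lemma: variance} for possibly dependent variables; your initial Young split costs a factor of $2$ on each term relative to the stated constants, which you acknowledge and which is immaterial to how the lemma is used downstream.
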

\begin{proof}
From previous analysis, we guarantee $\frac{1}{n}\sum_i\cc_i^{(t)} = \zero$. Then the averaged progress between communication could be rewritten as
	\begin{equation}\nonumber
		\begin{aligned}
			\mE||\bar\xx^{(t+1)}-\bar\xx^{(t)}||^2 & = {\eta}^2\mE||\frac{1}{n}\sum_{i,k}\nabla F_i(\xx_i^{(t)+k}; \xi^{(t)+k}) + \frac{K}{n}\sum_i\cc_i^{(t)}||^2		\\
			& \leq \frac{K{\eta}^2}{n}\sum_{i,k}2\mE||\nabla f_i(\xx_i^{(t)+k}) - \nabla f_i(\bar\xx^{(t)}) ||^2 + 2K^2{\eta}^2\mE||\nabla f(\bar\xx^{(t)})||^2+ \frac{K{\eta}^2\sigma^2}{n}	\\
			& \leq \frac{2K{\eta}^2L^2}{n}\sum_{i,k}\mE||\xx_i^{(t)+k} - \bar\xx^{(t)}||^2 + 2K^2{\eta}^2\mE||\nabla f(\bar\xx^{(t)})||^2+ \frac{K{\eta}^2\sigma^2}{n}	\\
		\end{aligned}
	\end{equation}
	In the first inequality, note that the $K$ random variable $
\{\xi^{(t)+k}\}_{k=0}^{K-1}$ when conditioned on communication $(t)$ may not be independent of each other but each has variance smaller than $\sigma^2$ due to Assumption~\ref{ass:noise}, and we can apply Lemma \ref{lemma: variance}. Then the following inequalities are from the repeated application of triangle inequality.
\end{proof}
\vspace{-6mm}

\subsubsection*{\textbf{Descent lemma for non-convex case}}\label{sec: descent}
\begin{lemma}\label{lemma: descent}When function $f$ is $L$-smooth, the averages $\bar\xx^{(t)}$ of the iterates of Algorithm \ref{algo: k-gt} and Algorithm \ref{algo: periodical_gt} with the constant stepsize $\eta_c < \frac{1}{4\eta_sKL}$, satisfy
$$\mE f(\bar\xx^{(t+1)}) - \mE f(\bar\xx^{(t)})\leq-\frac{K{\eta}}{4}\mE||\nabla f(\bar\xx^{(t)})||^2 + \eta L^2\cE_t+\frac{K{\eta}^2L}{2n}\sigma^2$$
\end{lemma}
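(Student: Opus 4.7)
The plan is to follow the standard descent argument for $L$-smooth non-convex functions, exploiting the key fact (established earlier in the text) that the averaged correction vanishes, i.e., $\bar{\mC}^{(t)} = \zero$, so the averaged iterate evolves as if driven by a noisy version of $\frac{1}{n}\sum_i \nabla f_i(\xx_i^{(t)+k})$. First I would write the smoothness inequality
\begin{equation}\nonumber
f(\bar\xx^{(t+1)}) \leq f(\bar\xx^{(t)}) + \bigl\langle \nabla f(\bar\xx^{(t)}),\, \bar\xx^{(t+1)} - \bar\xx^{(t)}\bigr\rangle + \tfrac{L}{2}\|\bar\xx^{(t+1)} - \bar\xx^{(t)}\|^2,
\end{equation}
take conditional expectation, and use that $\bar\xx^{(t+1)} - \bar\xx^{(t)} = -\eta \sum_{k=0}^{K-1} \frac{1}{n}\sum_i \nabla F_i(\xx_i^{(t)+k};\xi_i^{(t)+k})$ (since $\bar{\mC}^{(t)}=\zero$), so that in expectation the drift term is $-\eta \sum_k \tfrac{1}{n}\sum_i \nabla f_i(\xx_i^{(t)+k})$.

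Next I would handle the linear (inner-product) term by adding and subtracting $\nabla f(\bar\xx^{(t)})$ inside the average over $i$, then applying the identity $2\langle a,b\rangle = \|a\|^2 + \|b\|^2 - \|a-b\|^2$ at each $k$. This yields a dominant negative term $-\tfrac{K\eta}{2}\|\nabla f(\bar\xx^{(t)})\|^2$ together with a positive drift-penalty of the form $\tfrac{\eta}{2}\sum_k \|\tfrac{1}{n}\sum_i \nabla f_i(\xx_i^{(t)+k}) - \nabla f(\bar\xx^{(t)})\|^2$. Using Jensen's inequality and $L$-smoothness, the latter is bounded by $\tfrac{\eta L^2}{2n}\sum_{k,i}\|\xx_i^{(t)+k}-\bar\xx^{(t)}\|^2 = \tfrac{\eta L^2}{2}\cE_t$.

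For the quadratic term $\tfrac{L}{2}\|\bar\xx^{(t+1)} - \bar\xx^{(t)}\|^2$, I would directly invoke Lemma~\ref{lemma: comm_progress}, giving $K\eta^2 L^3 \cE_t + K^2\eta^2 L \|\nabla f(\bar\xx^{(t)})\|^2 + \tfrac{K\eta^2 L}{2n}\sigma^2$. The stepsize assumption $\eta = \eta_s\eta_c < \tfrac{1}{4KL}$ ensures $K^2\eta^2 L \leq \tfrac{K\eta}{4}$, which absorbs the $\|\nabla f(\bar\xx^{(t)})\|^2$ contribution from the quadratic into the leading linear term, leaving a coefficient $-\tfrac{K\eta}{4}$ as desired. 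The same stepsize bound also ensures the $\cE_t$ contribution from the quadratic, $K\eta^2 L^3\cE_t$, is dominated by the $\tfrac{\eta L^2}{2}\cE_t$ from the first-order part (in fact one can absorb it into an overall $\eta L^2\cE_t$).

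The main technical obstacle is managing the cross terms cleanly while keeping the $\sigma^2$ dependence tight: the stochastic noise enters only through $\|\bar\xx^{(t+1)}-\bar\xx^{(t)}\|^2$, and we need Lemma~\ref{lemma: comm_progress}'s bound $\tfrac{K\eta^2\sigma^2}{n}$ (rather than a crude $K^2$ bound) to obtain the $\tfrac{1}{n}$ linear-speedup scaling in the final $\tfrac{K\eta^2 L}{2n}\sigma^2$ term. This relies on the conditional independence (or at least pairwise-zero-covariance) of the $n$ stochastic gradients across clients at a fixed local step $k$, which follows from the sampling model in Assumption~\ref{ass:noise}.
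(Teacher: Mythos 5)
Your proposal is correct and follows essentially the same route as the paper's proof: smoothness of $f$, the vanishing averaged correction $\bar\mC^{(t)}=\zero$, a Young/polarization bound on the inner-product term yielding $-\tfrac{K\eta}{2}\mE\|\nabla f(\bar\xx^{(t)})\|^2+\tfrac{\eta L^2}{2}\cE_t$, Lemma~\ref{lemma: comm_progress} for the quadratic term, and the stepsize condition $\eta=\eta_s\eta_c\le\tfrac{1}{4KL}$ to absorb $K^2\eta^2L\le\tfrac{K\eta}{4}$ and $K\eta^2L^3\le\tfrac{\eta L^2}{2}$. The only difference is presentational (you apply the polarization identity to the averaged gradient gap and then Jensen, whereas the paper applies Young's inequality termwise), which yields identical bounds.
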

\begin{proof}Because the local functions $\{f_i(\xx)\}$ are $L$-smooth according to Assumption~\ref{ass: smooth}, it's trivial to conclude that the global function $f(\xx)$ is also $L$-smooth.
\begin{equation}\nonumber
	\begin{aligned}
		\mE f(\bar\xx^{(t+1)}) & = \mE f\Big(\bar\xx^{(t)}-\frac{\eta}{n}\sum_{i,k}(\nabla F_i(\xx^{(t)+k}_i;\xi_i^{(t)+k})+\cc_i^{(t)})\Big)	\\
			&\leq   \mE f(\bar\xx^{(t)}) + \underbrace{\mE\Big\langle\nabla f(\bar\xx^{(t+1)}),-\frac{\eta}{n}\sum_{i,k}(\nabla F_i(\xx_i^{(t)+k};\xi_i^{(t)+k})+\cc_{i}^{(t)})\Big\rangle}_{:=U} + \frac{L}{2}\mE||\bar\xx^{(t+1)}-\bar\xx^{(t)}||^2
	\end{aligned}
\end{equation}
From our previous analysis, we know $\frac{1}{n}\sum_i\cc_i^{(t)}=0,\ \forall t\geq 0$ for$K$-GT and Periodical GT~(if with initialization indicated in purposed algorithm)
\begin{equation}\nonumber
	\begin{aligned}
		U:&=\mE\Big\langle\nabla f(\bar\xx^{(t+1)}), -\frac{\eta}{n}\sum_{i,k}(\nabla F_i(\xx_i^{(t)+k};\xi_i^{(t)+k})+\cc_{i}^{(t)})\Big\rangle	\\
		& = \mE\Big\langle\nabla f(\bar\xx^{(t)}),\ -\frac{\eta}{n}\sum_{i,k}\mE_{\xi_i^{(t)+k}}\nabla F_i(\xx_i^{(t)+k}; \xi_i^{(t)+k})\Big\rangle	\\
		&=-K{\eta}\mE\Big\langle\nabla f(\bar\xx^{(t)}), \frac{1}{nK}\sum_{i,k}\nabla f_i(\xx_i^{(t)+k})-\nabla f(\bar\xx^{(t)}) + f(\bar\xx^{(t)})\Big\rangle	\\
		&= -K{\eta}\mE||\nabla f(\bar\xx^{(t)})||^2+ \frac{1}{nK}\sum_{i,k}K{\eta}\mE\Big\langle\nabla f(\bar\xx^{(t)}), \Big(\nabla f_i(\xx_i^{(t)+k})-\nabla f_i(\bar\xx^{(t)})\Big)\Big\rangle	\\
		& \leq - \frac{K\eta}{2}\mE||\nabla f(\bar\xx^{(t)})||^2 + \frac{K\eta}{2nK}\sum_{i,k}\mE||\nabla f_i(\xx^{(t)+k}_i)-\nabla f_i(\bar\xx^{(t)})||^2	\\
		& \leq - \frac{K\eta}{2}\mE||\nabla f(\bar\xx^{(t)})||^2 + \frac{KL^2\eta}{2nK}\sum_{i,k}\mE||\xx_i^{(t)+k}-\bar\xx^{(t)}||^2
	\end{aligned}
\end{equation}
Then also plug in the Lemma \ref{lemma: comm_progress} for $\mE||\bar\xx^{(t+1)}-\bar\xx^{(t)}||^2$, we have
\begin{equation}\nonumber
	\begin{aligned}
		\mE f(\bar\xx^{(t+1)})	
		& \leq  \mE f(\bar\xx^{(t)})+(- \frac{K{\eta}}{2} + K^2{\eta}^2L)\mE||\nabla f(\bar\xx^{(t)})||_F^2 + (\frac{{\eta} L^2}{2}+K{\eta}^2L^3)\cE_t+\frac{K{\eta}^2L}{2n}\sigma^2	\\
	\end{aligned}
\end{equation}
Then the choice  ${\eta}\leq\frac{1}{4KL}$ completes the proof.
\end{proof}

\subsubsection*{\textbf{Main recursion}}\label{sec: recursion}
We first construct a potential function $\cH_t = \mE f(\bar\xx^{(t)}) - \mE f(\xx^{(\star)}) + A\frac{(K{\eta_c})^3L^4}{p\eta_s^2}\gamma_{t}+\frac{B}{6v^2}\frac{K{\eta_c} L^2}{p}\Xi_{t}$ where constants $A$, $B$ and $v$ can be obtained through the following lemma. 
\begin{lemma}[Recursion for $K$-GT]\label{lemma: potential}
For any effective stepsize of Algorithm~\ref{algo: k-gt} satisfying $\eta_s=\tilde O(\frac{p}{KL})$ and $\eta_c=\tilde O(p)$, there exists constants $A,\ B,\ v$ satisfying $D>0$ and $D_5 \geq 0$. Then we have the recursion 
 $$\cH_{t+1}-\cH_t\leq -DK\eta\mE||\nabla f(\bar\xx^{(t)})||^2+\frac{D_5L^2}{pK}(K\eta)^3\sigma^2 + \frac{L}{2nK}(K\eta)^2\sigma^2.$$
\end{lemma}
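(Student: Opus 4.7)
The plan is to combine the four auxiliary recursions---Lemma~\ref{lemma: descent} for $\mE f(\bar\xx^{(t)})$, Lemma~\ref{lemma: consensus} for $\Xi_t$, the $K$-GT branch of Lemma~\ref{lemma: correction} for $K\gamma_t$, and Lemma~\ref{lemma: client_drift} for $\cE_t$---into a single Lyapunov descent. The first three share $\cE_t$ as an error, so I would first substitute Lemma~\ref{lemma: client_drift} into each of them. What survives are only the quantities $\Xi_t$, $K\gamma_t$, $\mE\|\nabla f(\bar\xx^{(t)})\|^2$, and $\sigma^2$, which is precisely what the potential $\cH_t$ is built to track. The weights $A\frac{(K\eta_c)^3 L^4}{p\eta_s^2}$ and $\frac{B}{6v^2}\frac{K\eta_c L^2}{p}$ are designed so that the $(1-p/2)$ contractions in Lemmas~\ref{lemma: consensus} and~\ref{lemma: correction} each leave a slack term large enough to absorb every other $\Xi_t$ and $K\gamma_t$ contribution generated by the combination.

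\textbf{Coefficient choice.} I would pick $B$ first: the slack $-\frac{pB}{12 v^2}\cdot\frac{K\eta_c L^2}{p}\Xi_t$ from the contraction of $\Xi_t$ must dominate the $\Xi_t$-injections that come from feeding Lemma~\ref{lemma: client_drift} into the $\cE_t$ pieces of Lemmas~\ref{lemma: descent} and~\ref{lemma: correction}. Under $\eta_s = \tilde O(p/(KL))$ and $\eta_c = \tilde O(p)$ those injections carry at least one extra factor of $p$ or $K\eta_c L$, so a sufficiently large absolute constant $B$ suffices. I would then pick $A$ analogously so that $-\frac{pA}{2}\cdot\frac{(K\eta_c)^3 L^4}{p\eta_s^2}K\gamma_t$ dominates the $K\gamma_t$ contributions from the $\Xi$-recursion and the $\gamma_t$ part of $\cE_t$ that flows into the descent and correction recursions. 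The constant $v$ is a free normalization used to clean the coefficient on the gradient norm.

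\textbf{Extracting the descent and noise.} Once the $\Xi_t$ and $K\gamma_t$ coefficients are made non-positive, only the $\mE\|\nabla f(\bar\xx^{(t)})\|^2$ and $\sigma^2$ terms remain. Lemma~\ref{lemma: descent} contributes $-\tfrac{K\eta}{4}$ on the gradient norm; the $\cE_t$-substitution in Lemmas~\ref{lemma: descent} and~\ref{lemma: correction} introduces positive contributions of orders $K^3\eta_c^2\cdot\eta L^2$ and $K^3\eta_c^2\cdot\frac{(K\eta_c)^3 L^4}{p\eta_s^2}$ respectively, both of which the stepsize conditions force to be at most $\tfrac{K\eta}{8}$, yielding a net $-DK\eta$ with $D>0$. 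The residual noise has three sources: $\frac{K\eta^2 L}{2n}\sigma^2$ from Lemma~\ref{lemma: descent}, $K\eta^2\sigma^2$ from Lemma~\ref{lemma: consensus} (weighted by $\frac{B}{6v^2}\frac{K\eta_c L^2}{p}$), $\frac{2\sigma^2}{L^2}$ from Lemma~\ref{lemma: correction} (weighted by $A\frac{(K\eta_c)^3 L^4}{p\eta_s^2}$), and $K^2\eta_c^2\sigma^2$ through the $\cE_t$-substitution weighted by the same factors. Rewriting everything in terms of $\eta=\eta_s\eta_c$ collects these into exactly $\frac{D_5 L^2}{pK}(K\eta)^3\sigma^2 + \frac{L(K\eta)^2}{2nK}\sigma^2$.

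\textbf{Main obstacle.} The hard part is the bookkeeping: tracking coefficients across different powers of $K$, $\eta_c$, $\eta_s$, $L$, and $1/p$ and verifying that both the $K\gamma_t$ and $\Xi_t$ coefficients land non-positive simultaneously without tightening the stepsize beyond what is claimed. The non-trivial point is that the $K$-GT correction recursion contributes a $\sigma^2/L^2$ noise not divided by $K$, and the only way it turns into a $(K\eta)^3/(pK)$-scaled contribution is via the $\eta_s^{-2}$ factor baked into the $\gamma$-weight of $\cH_t$; this is precisely why the weight is chosen with $(K\eta_c)^3 L^4/(p\eta_s^2)$ rather than a symmetric form. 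I would also need to check that the stepsize prerequisites of Lemmas~\ref{lemma: client_drift} ($\eta_c\le 1/(8KL)$) and~\ref{lemma: correction} ($\eta\le\sqrt{p/6}/(KL)$) are implied by $\eta_s=\tilde O(p/(KL))$, $\eta_c=\tilde O(p)$, which pins down the hidden absolute constants in the $\tilde O$ notation.
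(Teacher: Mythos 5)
Your proposal follows essentially the same route as the paper's proof: combine Lemmas~\ref{lemma: descent}, \ref{lemma: consensus}, \ref{lemma: correction}, and \ref{lemma: client_drift} into the given potential, choose $B$ then $A$ so that the $(1-p/2)$ contractions absorb the cross-injections of $\Xi_t$ and $\gamma_t$, and collect the residual gradient and noise terms under the stated stepsize restrictions. The only cosmetic difference is that you substitute the client-drift bound directly into the other recursions, while the paper keeps $\cE_t$ as an explicitly tracked error with its own multiplier $C=vp$ and requires its net coefficient $D_3$ to be non-positive---an algebraically equivalent bookkeeping device.
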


\begin{proof}
First, from previous bound on those error term $\gamma_t$, $\Xi_t$ and $\cE_t$, 
we could bound the difference between $\cH_{t+1}$ and $\cH_t$ for $K$-GT, while we also plug in with $C>0$ the $$0\leq-C\eta_c L^2\cE_t + 3C(K\eta_c L^2\Xi_{t}) + 12C(K\eta_c)^3L^4\gamma_t + C\frac{6(K{\eta_c})^2L^2}{\eta_s}K\eta\mE||\nabla f(\bar\xx^{(t)})||^2 + 3C(K{\eta_c})^3\frac{L^2\sigma^2}{K}.$$ Then we have the inequality recursion for $K$-GT as follows, 
\begin{equation}\nonumber
	\begin{aligned}
		\cH_{t+1}-\cH_t&\leq \underbrace{\Big(-\frac{A}{2} + B\frac{\eta_s^2}{v^2p^2} + 12C\Big)}_{\leq D_1}(K{\eta_c})^3L^4\gamma_t	\\
		&\quad+\underbrace{\Big( - \frac{B}{12v^2} + 3C\Big)}_{\leq D_2}K{\eta_c} L^2\Xi_{t} 	\\
		&\quad+ \underbrace{\Big(A\frac{30(K{\eta_c} L)^2}{p^2K} +B\frac{K^2{\eta}^2L^2}{v^2p^2}-C+\eta_s\Big)}_{\leq D_3}{\eta_c} L^2\cE_t	\\
		&\quad + \underbrace{\Big(-\frac{1}{4} + A\frac{12\eta_s(K{\eta_c})^4L^4}{p} +C\frac{6K^2{\eta_c}^2L^2}{\eta_s}\Big)}_{\leq D_4}K{\eta}\mE||\nabla f(\bar x^{(t)})||^2	\\
		&\quad+\underbrace{\Big(A\frac{2L^2}{pK} + B\frac{\eta_s^2L^2}{6v^2pK} + C\frac{3L^2}{K}\Big)}_{\leq\frac{D_5L^2}{pK}}(K{\eta_c})^3\sigma^2+\frac{L}{2nK}(K{\eta})^2\sigma^2
	\end{aligned}
\end{equation}
As long as ${\eta_c}\leq\frac{p}{96vKL},\ \eta_s= v\cdot p\rightarrow \eta_c\eta_s\leq\eta=\frac{p^2}{96KL}$ and $A=72v^3p+48vp,\ B=36v^3p,\ C=vp$ there exists constant $v>1$ that makes $D_1,\ D_2,\ D_3\leq 0$, $D_4<0$. And $D_4\leq -D<0$, and $D_5\geq 0$, which completes the proof.
\end{proof} 

\begin{lemma}[Recursion for Periodical GT]For any effective stepsize of Algorithm~\ref{algo: k-gt} satisfying $\eta_s=\tilde O(\frac{p}{KL})$ and $\eta_c=\tilde O(p)$, there exists constants $A,\ B,\ v$ satisfying $D>0$ and $D_5 \geq 0$. Then we have the recursion 
 $$\cH_{t+1}-\cH_t\leq -DK\eta\mE||\nabla f(\bar\xx^{(t)})||^2+\frac{D_5L^2}{p}(K\eta)^3\sigma^2 + \frac{L}{2nK}(K\eta)^2\sigma^2.$$
\end{lemma}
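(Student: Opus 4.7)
The plan is to mirror the proof of Lemma~\ref{lemma: potential} for $K$-GT, since Periodical GT shares the same local corrected-SGD structure (Proposition~\ref{prop: tv_gt}) and, through our preceding lemmas, satisfies analogous descent, consensus, client-drift, and correction recursions. Using the same potential function
\[
  \cH_t \;=\; \mE f(\bar\xx^{(t)}) - \mE f(\xx^{(\star)}) \;+\; A\tfrac{(K\eta_c)^3 L^4}{p\eta_s^2}\,\gamma_t \;+\; \tfrac{B}{6v^2}\tfrac{K\eta_c L^2}{p}\,\Xi_t,
\]
I would add up four inequalities: (i) the descent lemma (Lemma~\ref{lemma: descent}), (ii) the consensus recursion (Lemma~\ref{lemma: consensus}) multiplied by $\tfrac{B}{6v^2}\tfrac{K\eta_c L^2}{p}$, (iii) the Periodical-GT correction recursion (first bound of Lemma~\ref{lemma: correction}) multiplied by $\tfrac{A}{K}\tfrac{(K\eta_c)^3 L^4}{p\eta_s^2}$, and (iv) a nonnegative slack $C\eta_c L^2$ times the client-drift bound (Lemma~\ref{lemma: client_drift}). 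This produces coefficients $D_1,\dots,D_4$ for the four error terms $(K\eta_c)^3 L^4\gamma_t$, $K\eta_c L^2 \Xi_t$, $\eta_c L^2 \cE_t$, and $K\eta\,\mE\|\nabla f(\bar\xx^{(t)})\|^2$, plus a $\sigma^2$ remainder.

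The only structural differences from the $K$-GT case are localized in the $\gamma$ recursion: Periodical GT carries an extra term $\tfrac{24}{p}(K e_{K-1,t})$ and a noise term $\tfrac{2K\sigma^2}{L^2}$ instead of $\tfrac{2\sigma^2}{L^2}$. I would absorb the first by the bound $e_{K-1,t}\le \cE_t$, which simply inflates the constant $30$ in front of $\cE_t$ (as it appears in $D_3$) into a slightly larger one (e.g.\ $26$); this keeps the same qualitative condition on $C$. The second carries a factor $K$ through the weight $A\tfrac{(K\eta_c)^3 L^4}{p\eta_s^2}$, yielding a noise contribution of order $\tfrac{AL^2}{p}(K\eta_c)^3\sigma^2$, i.e., $K$ times larger than $K$-GT's $\tfrac{AL^2}{pK}(K\eta_c)^3\sigma^2$. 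Adding the $\tfrac{L}{2nK}(K\eta)^2\sigma^2$ noise from the descent lemma (unchanged) gives exactly the advertised noise structure $\tfrac{D_5 L^2}{p}(K\eta)^3\sigma^2 + \tfrac{L}{2nK}(K\eta)^2\sigma^2$.

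The main work is then to verify that, under the stepsize restrictions $\eta_c=\tilde O(p/(KL))$ and $\eta_s=\tilde O(p)$ together with $\eta_c\le \tfrac{\sqrt p}{\sqrt 6 KL}$ and $\eta_c\le \tfrac{1}{4\eta_s KL}$ required by Lemmas~\ref{lemma: correction} and \ref{lemma: descent}, one can pick constants $A=\Theta(v^3p+vp)$, $B=\Theta(v^3 p)$, $C=\Theta(vp)$, and a sufficiently large $v>1$ making all of $D_1,D_2,D_3\le 0$ and $D_4\le -D<0$; the algebra is essentially identical to the $K$-GT case because the $\Xi_t$, $\cE_t$, and $\gamma_t$ weights are fixed and the extra $e_{K-1,t}$ contribution is absorbed before the constant balancing.

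The main obstacle is a bookkeeping one: one must check that the slightly altered coefficients in $D_3$ (from the $e_{K-1,t}$ absorption) and the factor-of-$K$ change in $D_5$ do not interact adversely with the other coefficients and do not tighten the stepsize restrictions beyond $\eta_c=\tilde O(p/(KL))$, $\eta_s=\tilde O(p)$. Apart from these constants, no new ideas beyond the $K$-GT recursion proof are required; the conclusion is precisely the claimed inequality.
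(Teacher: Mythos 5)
Your overall route is the same as the paper's: the same potential $\cH_t$, the same four ingredients (descent lemma, consensus recursion, the Periodical-GT branch of Lemma~\ref{lemma: correction}, and a client-drift slack term), and you correctly identify that the only substantive change relative to Lemma~\ref{lemma: potential} is the factor-$K$ larger stochastic-noise term in the correction recursion, which is exactly what turns $A\tfrac{2L^2}{pK}$ into $A\tfrac{2L^2}{p}$ inside $D_5$ and produces the advertised $\tfrac{D_5L^2}{p}(K\eta)^3\sigma^2$.

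The one step that does not work as written is your absorption of the extra term $\tfrac{24}{p}(Ke_{K-1,t})$. The bound $e_{K-1,t}\le\cE_t$ gives $\tfrac{24}{p}Ke_{K-1,t}\le\tfrac{24K}{p}\cE_t$, so the coefficient becomes $24K+2$, not a constant like $26$; your arithmetic implicitly uses $Ke_{K-1,t}\le\cE_t$, which is false in general (the drift accumulates over local steps, so $e_{K-1,t}$ is typically the \emph{largest} summand of $\cE_t$ and $Ke_{K-1,t}\ge\cE_t$). The clean fix, and what the paper effectively does, is to observe that the per-step drift bound~(\ref{eq: epsilon_t}) holds uniformly in $k$, so $Ke_{K-1,t}$ satisfies exactly the same upper bound as $\cE_t$ in Lemma~\ref{lemma: client_drift}; one then spends a separate slack of the form $0\le -\tfrac{C}{2}Ke_{K-1,t}+\tfrac{C}{2}(\text{bound from }(\ref{eq: epsilon_t}))$ on this term (this is why the paper's displayed recursion for Periodical GT carries a harmless leftover $-\tfrac{C}{2}Ke_{K-1,t}\le 0$). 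With that replacement the constant balancing is genuinely identical to the $K$-GT case and your conclusion stands; without it, you would have to re-verify that a coefficient growing linearly in $K$ in $D_3$ is still dominated under the stated stepsize, which is precisely the bookkeeping risk you flagged but did not resolve.
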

\begin{proof}
The sets of inequality for Periodical GT only differs in stochastic noise compared to $K$-GT. Then applied with the same principle as that for $K$-GT, we get its recursion of potential function as follows
\begin{equation}\nonumber
	\begin{aligned}
		\cH_{t+1}-\cH_t&\leq  D_1(K{\eta_c})^3L^4\gamma_t + D_2K{\eta_c} L^2\Xi_{t} + D_3{\eta_c} L^2\cE_t + D_4K{\eta}\mE||\nabla f(\bar x^{(t)})||^2	\\
		&\quad+\underbrace{\Big(A\frac{2L^2}{p} + B\frac{\eta_s^2L^2}{6v^2pK} + C\frac{3L^2}{K}\Big)}_{\leq\frac{D_5L^2}{p}}(K{\eta_c})^3\sigma^2+\frac{L}{2nK}(K{\eta})^2\sigma^2-\frac{C}{2}Ke_{K-1,t}
	\end{aligned}
\end{equation}
The rest of the analysis could refer to Lemma~\ref{lemma: potential}.
\end{proof}
\begin{remark} Using full gradient to improve Periodical GT has the same recursion as $K$-GT.
\end{remark}

\subsubsection*{\textbf{Solve the main recursion}}
Take $K$-GT as an example. Consider the telescope sum of the potential function, we can derive
\begin{equation}\nonumber
	\begin{aligned}
		\frac{1}{T+1}\sum_{t=0}^{T}\Big(\cH_{t+1}-\cH_t\Big) & = \frac{1}{T+1}\Big(\cH_{T+1}-\cH_{0} \Big)	\\
		&	 \underset{\eta=\eta_s\eta_c}{\leq} -DK{\eta}\frac{1}{T+1}\sum_{t=0}^{T}\mE||\nabla f(\bar\xx^{(t)})||^2 +  \frac{D_5L^2}{pK\eta_s^3}(K\eta)^3\sigma^2+	\frac{L}{2nK}(K{\eta})\sigma^2	\\
		\underset{\eta_s=v\cdot p}{\Rightarrow}\frac{1}{T+1}\sum_{t=0}^{T}\mE||\nabla f(\bar\xx^{(t)})||^2 & \leq \frac{\cH_{0}-\cH_{T+1}}{(T+1)D}\frac{1}{K{\eta}} +\frac{L\sigma^2}{2nKD}(K{\eta})+\frac{D_5L^2\sigma^2}{v^3p^4KD}(K{\eta})^2	\\
	\end{aligned}
\end{equation}

W.l.o.g we consider that $f(\xx)$ is non-negative. 
Then we could neglect the effect of $-\cH_{T+1}$.
\begin{lemma} There exists constant stepsize such that $$\frac{1}{T+1}\sum_{t=0}^{T}\mE||\nabla f(\bar\xx^{(t)})||^2 =\cO\Big(\sqrt{\frac{\sigma^2L\cH_0}{nKT}}+(\frac{\sigma L\cH_0}{p^2KT})^{\frac{2}{3}}+\frac{L\cH_0}{p^2T}\Big)
.$$
\end{lemma}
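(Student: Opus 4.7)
My strategy is to apply the stepsize-tuning Lemma~\ref{lemma: stepsize} directly to the recursion derived immediately before the statement. Setting $\tilde\eta := K\eta$ as the free variable, the bound just obtained has exactly the template form
\[
\Psi_T(\tilde\eta) \;=\; \frac{r_0}{(T+1)\,\tilde\eta} \;+\; b\,\tilde\eta \;+\; e\,\tilde\eta^2,
\]
with $r_0 = \cH_0/D$, $b = L\sigma^2/(2nKD)$, and $e = D_5 L^2\sigma^2/(v^3 p^4 K D)$. This matches the hypothesis of Lemma~\ref{lemma: stepsize}, so it remains only to identify the upper-bound parameter $u$ and substitute.

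To obtain $u$, I would consolidate the stepsize restrictions accumulated throughout the preceding lemmas --- $\eta_c \leq 1/(8KL)$ from Lemma~\ref{lemma: client_drift}, $\eta \leq \sqrt p/(\sqrt 6 KL)$ from Lemma~\ref{lemma: correction}, $\eta < 1/(4\eta_s KL)$ from Lemma~\ref{lemma: descent}, and $\eta_s\eta_c \leq p^2/(96KL)$ from Lemma~\ref{lemma: potential} --- into a single constraint of the form $\tilde\eta \leq 1/u$ with $u = \Theta(L/p^2)$ (using $\eta_s = vp$). Lemma~\ref{lemma: stepsize} then furnishes a constant $\tilde\eta \leq 1/u$ satisfying
\[
\Psi_T \;\leq\; 2\sqrt{\tfrac{b\,r_0}{T+1}} \;+\; 2\,e^{1/3}\Bigl(\tfrac{r_0}{T+1}\Bigr)^{2/3} \;+\; \tfrac{u\,r_0}{T+1}.
\]

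Substituting the expressions for $b,\,e,\,r_0,\,u$ and absorbing the numerical constants $D,\,v,\,D_5$ into the $\cO(\cdot)$ gives the three summands in the statement:\ the $\sqrt{br_0/T}$ term becomes $\cO(\sqrt{\sigma^2 L\cH_0/(nKT)})$, the $e^{1/3}(r_0/T)^{2/3}$ term becomes $\cO\bigl((\sigma L\cH_0/(p^2 \sqrt K\,T))^{2/3}\bigr)$ (after noting $e^{1/3}\propto L^{2/3}\sigma^{2/3}/(p^{4/3}K^{1/3})$), and the $ur_0/T$ term becomes $\cO(L\cH_0/(p^2 T))$.

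The main obstacle is not analytical but careful bookkeeping:\ I must verify that the value of $\tilde\eta$ selected by Lemma~\ref{lemma: stepsize} genuinely respects \emph{every} constraint used to derive the recursion (each lemma imposes its own cap on $\eta_c$ or $\eta_s\eta_c$, and some also implicitly on $\eta_s$). Once all such constraints are packaged into a single upper bound $\tilde\eta \leq 1/u$ before invoking the tuning lemma, the recursion continues to hold at the chosen stepsize, and the displayed asymptotic rate follows by direct substitution with no further nontrivial work.
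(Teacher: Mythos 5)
Your proposal is correct and follows essentially the same route as the paper: it applies the unrolling Lemma~\ref{lemma: stepsize} to the recursion in the variable $K\eta$ with $r_0=\cH_0/D$, $b=L\sigma^2/(2nKD)$, $e=D_5L^2\sigma^2/(v^3p^4KD)$, and $u=\Theta(L/p^2)$ collected from the accumulated stepsize caps. As a side note, your middle term $\cO\bigl((\sigma L\cH_0/(p^2\sqrt{K}\,T))^{2/3}\bigr)$ is the one actually consistent with the paper's intermediate computation and with Theorem~\ref{thm: k-gt}; the $K$ (rather than $\sqrt{K}$) in the lemma statement appears to be a typo.
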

\begin{proof}
The non-negative sequences $\{\cH_t\}_{t=0}^{T+1}$ and $\{\mE||\nabla f(\bar\xx^{t})||\}_{t=0}^{T}$ with positive coefficients before both $K\eta$ and $(K\eta)^2$ satisfy the condition in Lemma~\ref{lemma: stepsize}. Then we tune the stepsize using Lemma~\ref{lemma: stepsize}. Then the average of accumulation of gradient could be upper-bounded by
\begin{equation}\nonumber
	\begin{aligned}
	\Rightarrow\frac{1}{T+1}\sum_{t=0}^{T}\mE||\nabla f(\bar\xx^{(t)})||^2 & \leq\cO\Big( 2(\frac{\frac{L\sigma^2}{2nK}\cH_{0}}{T+1})^{\frac{1}{2}}+2(\frac{L^2\sigma^2}{p^4K})^{\frac{1}{3}}(\frac{\cH_0(\xx)}{T+1})^{\frac{2}{3}}+\frac{\cH_{0}(\xx)}{K{\eta}_{max}(T+1)}\Big)	\\
	& = \cO\Big(\sqrt{\frac{\sigma^2L\cH_0}{nKT}}+(\frac{\sigma L\cH_0}{p^2KT})^{\frac{2}{3}}+\frac{L\cH_0}{p^2T}\Big)
	\end{aligned}
\end{equation}
\end{proof}
Then the convergence rate depends on the initial values of potential function $\cH_0$. By the definition of potential function in Lemma~\ref{lemma: potential}, $\cH_0$ is the combination of initial value for $f(\xx^{0})$, $\mE||\mX^{(0)}-\bar\mX^{(0)}||_F^2$ and $\mE||\mC^{(0)} + \nabla f(\bar\mX^{(0)})(-\mJ+\mI)||_F^2$.  

We assume that every node is guaranteed to be initialized with the same model $\xx^{(0)} = \xx_i^{(0)},\ \forall i\in[n]$. Then we could easily get $\mE||\mX^{(0)}-\bar\mX^{(0)}||_F^2=0$.  And if we initial the correction term with $\cc_i^{(0)} = -\nabla f_i(\xx^{(0)}) + \frac{1}{n}\sum_i\nabla f_i(\xx^{(0)})$, then $\mE||\mC^{(0)} + \nabla f(\mX^{(0)})(-\mJ+\mI)||_F^2 = 0$.
\begin{equation}
	\begin{aligned}
		\cH_0 &= f(\xx^0)-f(\xx^{\star}) + A\frac{(K{\eta_c})^3L^4}{p\eta_s^2}\gamma_{0}+\frac{B}{6v^2}\frac{K{\eta_c} L^2}{p}\Xi_{0}	\\
		& = f(\xx^0)-f(\xx^{\star}) := F_0
	\end{aligned}
\end{equation}
And then for arbitrary accuracy error $\epsilon>0$, the communication rounds needed to reach the target accuracy is upperbounded by $$T\leq\cO\Big(\frac{\sigma^2}{nK}\frac{1}{\epsilon^2}+\frac{\sigma}{p^2\sqrt{K}}\frac{1}{\epsilon^{\frac{3}{2}}}+\frac{1}{p^2}\frac{1}{\epsilon}\Big)\cdot LF_0,$$ which concludes the proof of Theorem~\ref{thm: k-gt} for $K$-GT. The proof of Theorem~\ref{thm: tv_gt} for Periodical GT can also be easily derived with the same principle.

\onecolumn
\section{Experimental details}\label{appendix: network}
\subsection{Visualization of benchmark datasets}
\begin{figure}[H]
	\begin{subfigure}[t]{0.15\textwidth}
  		\vspace{12pt}
		\includegraphics[width=1\textwidth]{./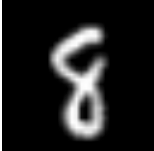}
		\caption{}
	\end{subfigure}
	\hfill
	\begin{subfigure}[t]{0.85\textwidth}
  		\vspace{0pt}
		\includegraphics[width=1\textwidth]{./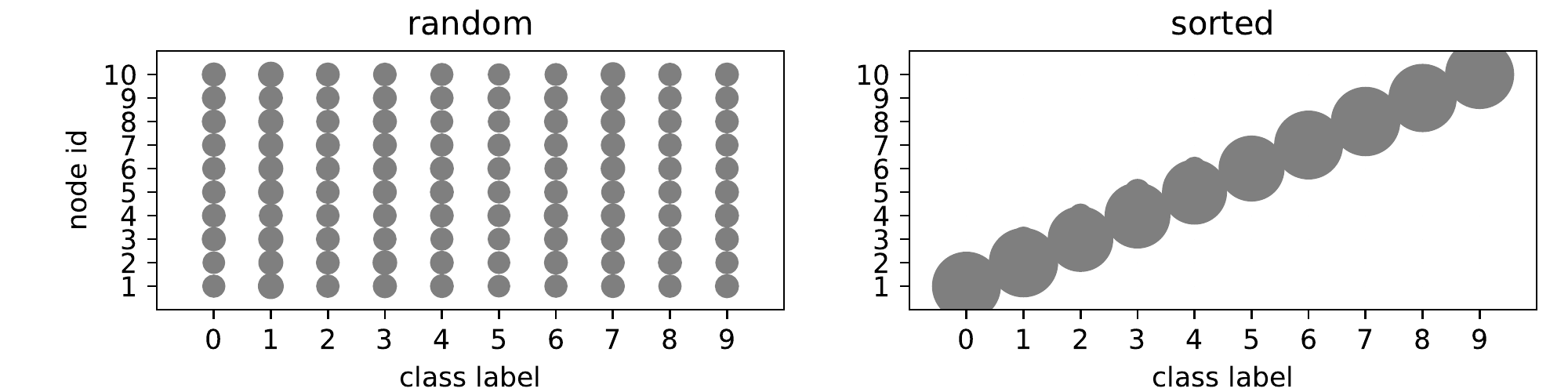}
		\caption{}
	\end{subfigure}
	\caption{Data visualization. (a) Example from \textsc{mnist} dataset. (b) Data partition on each node in the random and the sorted case when there are $n=10$ distributed nodes. The dot size indicates the number of samples per class allocated to each node. }
\end{figure}

We show an image example from \textsc{mnist} datasets and how data of different labels is partitioned in random and sorted case. It obviously presents in the random case, data of different labels are randomly and evenly partitioned among nodes, but in the sorted case, each node only contains images of 1 label and the labels obtained by each node is non-overlapping.
\subsection{Model structure}
For our non-convex experiment, we use a 4-layer Convolutional Neural
Network (CNN) and its details are listed in Table~\ref{tab: network}.
\begin{table}[H]
\caption{Model architecture of the benchmark experiment. For convolutional layer (Conv2D), we
list parameters with sequence of input and output dimension, kernal size, stride. For
max pooling layer (MaxPool2D), we list kernal and stride. For fully connected layer (FC), we list
input and output dimension. For drop out (Dropout), we list the parameter of probability.}
	\centering
	\renewcommand{\arraystretch}{1.2}
	\resizebox{0.75\textwidth}{!}{
		\begin{tabular}{l|l}		
			\toprule[1.1pt]
			layer & details			\\\hline
			1 & Conv2D(1, 10, 5, 1), MaxPool2D(2), ReLU	\\
			2 & Conv2D(10, 10, 5, 1), Dropout2D(0.5), MaxPool2D(2), ReLU	\\
			3 & FC(320, 50), ReLU	\\
			4 & FC(50, 10)	\\
			\bottomrule[1.1pt]
		\end{tabular}}
\label{tab: network}		
\end{table}

\end{document}